\definecolor{labelkey}{rgb}{0,0.08,0.45}
\definecolor{rekey}{rgb}{0,0.6,0.0}
\definecolor{Brown}{rgb}{0.45,0.0,0.05}
\newtheorem{theorem}{Theorem}[section]
\newtheorem{lemma}[theorem]{Lemma}
\newtheorem{corollary}[theorem]{Corollary}
\newtheorem{proposition}[theorem]{Proposition}
\newtheorem{definition}[theorem]{Definition}
\theoremstyle{plain}{\theorembodyfont{\rmfamily}
}
\theoremstyle{plain}{\theorembodyfont{\rmfamily}
}
\theoremstyle{plain}{\theorembodyfont{\rmfamily}
}
\theoremstyle{plain}{\theorembodyfont{\rmfamily}
\newtheorem{example}[theorem]{Example}}
\theoremstyle{plain}{\theorembodyfont{\rmfamily}
\newtheorem{remark}[theorem]{Remark}}
\theoremstyle{plain}{\theorembodyfont{\rmfamily}
}
\def\hat{\widehat}
\def\tilde{\widetilde}
\def\Bar{\overline}
\def\R{\mathbb{R}}
\begin{document}

\title{\textsf{Convergent Semidefinite Programming Relaxations for Global Bilevel Polynomial Optimization Problems}}
\author{\textsc{V. Jeyakumar\thanks{%
Department of Applied Mathematics, University of New South Wales, Sydney
2052, Australia. E-mail: v.jeyakumar@unsw.edu.au, Research was partially
supported by the Australian Research Council. }, \quad J.B. Lasserre\thanks{%
LAAS-CNRS and Institute of Mathematics, LAAS, France, E-mail:
lasserre@laas.fr. Research was partially supported by a PGM Grant of the  Foundation Math\'{e}matique Jacques Hadamard.} \quad G. Li\thanks{%
Department of Applied Mathematics, University of New South Wales, Sydney
2052, Australia. E-mail: g.li@unsw.edu.au, Research was partially supported
by  grant from the Australian Research Council. Part of this work was done while the author was visiting LAAS-CNRS in July 2014. } \quad and \quad T. S. Ph\d{a}m\thanks{
Institute of Research and Development, Duy Tan University, K7/25, Quang Trung, Danang, Vietnam,
and Department of Mathematics, University of Dalat, 1 Phu Dong Thien Vuong, Dalat, Vietnam.
E-mail: sonpt@dlu.edu.vn.
This author was partially supported by Vietnam National Foundation for Science and Technology Development (NAFOSTED) grant number 101.04-2013.07.
Part of the work was done while this author was visiting the Department of Applied Mathematics at University of New South Wales, Australia. } }
}
\date{{\bf Second Revised Version: January 9, 2016}}

\maketitle
\vspace{-1.1cm}
\begin{abstract}
In this paper, we consider a bilevel polynomial optimization problem where
the objective and the constraint functions of both the upper and the lower level
problems are polynomials. We present methods for finding its global
minimizers and global minimum using a sequence of semidefinite programming (SDP)
relaxations and provide convergence results for the methods. Our scheme for problems with a convex lower-level problem
involves solving a transformed equivalent single-level problem by a sequence of SDP relaxations; whereas our approach for general problems involving a
non-convex polynomial lower-level problem solves a sequence of approximation
problems via another sequence of SDP relaxations.
\medskip

\noindent
\textbf{Key words:} Bilevel programming, global optimization, polynomial optimization, semidefinite programming hierarchies.
\end{abstract}

\vspace{-0.6cm}

\vspace{-0.3cm}

\section{Introduction}

Consider the bilevel polynomial optimization problem
\begin{eqnarray*}
(P) &\displaystyle\min_{x\in \mathbb{R}^{n},y\in \mathbb{R}^{m}}&f(x,y) \\
&\mbox{subject to }&g_i(x,y)\leq 0,\ i=1,\ldots ,s, \\
&&y\in Y(x):=\displaystyle\mathrm{argmin}_{w\in \mathbb{R}%
^{m}}\{G(x,w):h_{j}(w)\leq 0,j=1,\ldots ,r\},
\end{eqnarray*}%
where $f:\mathbb{R}^{n}\times \mathbb{R}^{m}\rightarrow \mathbb{R}$, $g_{i}:%
\mathbb{R}^{n} \times \mathbb{R}^m \rightarrow \mathbb{R}$, $G:\mathbb{R}%
^{n}\times \mathbb{R}^{m}\rightarrow \mathbb{R}$ and $h_{j}: \mathbb{R}^{m}\rightarrow \mathbb{R}$ are all polynomials with
real coefficients, and we make the blanket assumption that the feasible set of (P) is nonempty, that is,
$\{(x,y) \in \mathbb{R}^n \times \mathbb{R}^m: g_i(x,y) \le 0, i=1,\ldots,s, \ y \in Y(x)\} \neq \emptyset$.

Bilevel optimization provides mathematical models for hierarchical
decision making processes where the follower's decision depends on the
leader's decision. More precisely, if $x$ and $y$ are the decision
variables of the leader and the follower respectively then the problem (P)
represents the so-called optimistic approach to the leader and follower's
game in which the follower is assumed to be co-operative and so, the leader
can choose the solution with the lowest cost. We note that, there is another approach, called
pessimistic approach, which assumes that the follower may not be co-operative
and hence the leader will need to prepare for the worst cost (see for example \cite{Dempe,bilevel}).  

The bilevel optimization problem (P) also requires that the constraints of the lower level problem are independent of the upper level decision variable $x$ (i.e. the functions $h_{j}$ do not depend on $x$). This independence assumption guarantees that the optimal value function of the lower level problem is automatically continuous, and so, plays an important role later in establishing convergence of our proposed approximation schemes for finding global optimal solutions of (P). A discussion on this assumption and its possible relaxation is given in Remark \ref{remark:3} in Section 4 of the paper.

As noted in \cite{Ye}, the models of the form $(P)$ cover
the situations in which the leader can only observe the outcome of the
follower's action but not the action itself, and so, has important applications in economics such as the so-called moral
hazard model of the principal-agent problem. In particular, in the special
case where $g_i$ depends only on $x$, the sets $\{x\in \mathbb{R}^{n}:g_{i}(x)\leq
0\}$ and $\{w\in \mathbb{R}^{m}:h_{j}(w)\leq 0\}$ are both convex sets,
problem (P) has been studied in \cite{Ye} and a smoothing projected gradient
algorithm has been proposed to find a stationary point of problem (P). On the other hand, the functions $%
f,g_{i},G,h_{j}$ of  (P) in \cite{Ye} are allowed to be continuously differentiable functions which may not
be polynomials in general. For applications and recent developments of solving more
general bilevel optimization problems, see \cite%
{Bilevel_book,Marcotte,Dempe0,Dempe,Vicente}.

In this paper, in the interest of simplicity, we focus on the {\it optimistic approach} to the hierarchical decision making process
and develop methods for finding a global minimizer and global minimum of $(P)$. We make the following key contributions to bilevel optimization.
\vspace{-0.4cm}
\begin{itemize}
\item{\bf A novel SDP hierarchy for bilevel polynomial problems.} We propose general purpose schemes for finding global solutions of the bilevel polynomial optimization problem (P) by solving hierarchies of
semidefinite programs and establish convergence of the schemes. Our approach makes use of the known techniques of bilevel optimization
and the recent developments of (single-level) polynomial optimization, such as the sums-of-squares decomposition and semidefinite programming hierarchy, and does not use any discretization or branch-and-bound techniques as in \cite{Floudas2001,Barton,bilevel}.

\item{\bf Convex lower-level problems: Convergence to global solutions.}  We first transform the bilevel polynomial optimization problem (P) with a {\em convex lower-level problem} into an equivalent single
level nonconvex polynomial optimization problem. We show that the values of the standard semidefinite programming relaxations of the transformed single level problem converge to the global optimal value of the bilevel problem (P) under a technical assumption that is commonly used in polynomial optimization (see \cite{Lasserre_book} and other references therein).

\item{\bf Non-convex lower-level problems: A new convergent approximation scheme.} By examining a sequence of $\epsilon $-approximation (single-level) problems of the bilevel problem (P) with a {\em not necessarily convex lower level problem}, we present another convergent  sequence of SDP relaxations of (P) under suitable conditions. Our approach extends the sequential SDP relaxations, introduced in \cite%
{Lasserre2010} for parameterized single-level polynomial problems, to bilevel polynomial optimization problems.
\end{itemize}

\vspace{-0.3cm} It is important to note that local bilevel optimization techniques, studied extensively in the literature \cite{Bilevel_book,Dempe0}, apply to broad classes of nonlinear bilevel optimization problems. In the present work, we employ some basic tools and techniques of semi-algebraic geometry to achieve convergence of our semidefinite programming hierarchies of global nonlinear bilevel optimization problems, and so our approaches are limited to studying the class of polynomial bilevel optimization problems.

Moreover, due to the limitation of the SDP programming solvers, our proposed scheme  can be used to solve problems with small or moderate size and it may not be able to compete with the ad-hoc (but computationally tractable) techniques, such as branch-and-bound methods and discretization schemes. For instance, underestimation and branch-and-bound techniques were used in \cite{mix,Floudas2001,Barton} and
a generalized semi-infinite programming reformulation together with a discretization technique was employed in
\cite{bilevel}. See http://bilevel.org/ for other references of computational methods of bilevel optimization.

However, it has recently been shown that, by exploiting sparsity and symmetry, large size problems can be solved efficiently and various numerical packages have been built to solve real-life problems such as the sensor network localization problem \cite{kim}. We leave the study of solving large size bilevel problems for future research as it is beyond the scope of this paper. \setcounter{equation}{0}

The outline of the paper is as follows. Section 2 gives preliminary results on polynomials and continuity properties of the solution map of the lower-level problem of (P). Section 3 presents convergence of our sequential SDP relaxation scheme for solving the problem (P) with a convex lower-level problem. Section 4 describes another sequential SDP relaxation scheme and its convergence for solving the general problem (P) with a not necessarily convex lower-level problem.
Section 5 reports results of numerical implementations of the proposed methods for solving some bilevel optimization test problems. The appendix provides details of various technical results of semi-algebraic geometry used in the paper and also proofs of certain technical results.

\section{Preliminaries}
We begin by fixing notation, definitions and preliminaries. Throughout this
paper $\mathbb{R}^{n}$ denotes the Euclidean space with dimension $n$. The
inner product in $\mathbb{R}^{n}$ is defined by $\langle x,y\rangle :=x^{T}y$
for all $x,y\in \mathbb{R}^{n}$. The open (resp. closed) ball in $\mathbb{R}^n$ centered at $x$ with radius $%
\rho$ is denoted by ${\mathbb{B}}(x,\rho)$ (resp. $\overline{\mathbb{B}}(x,\rho)$).
The non-negative
orthant of $\mathbb{R}^{n}$ is denoted by $\mathbb{R}_{+}^{n}$ and is
defined by $\mathbb{R}_{+}^{n}:=\{(x_{1},\ldots ,x_{n})\in \mathbb{R}^{n}\
|\ x_{i}\geq 0\}$. Denote by $\mathbb{R}[x]$ the ring of polynomials in $%
x:=(x_{1},x_{2},\ldots ,x_{n})$ with real coefficients. For a polynomial $f$
with real coefficients, we use $\mathrm{deg}\,f$ to denote the degree of $f$%
. For a differentiable function $f$ on $\mathbb{R}^n$, $\nabla f$ denotes its derivative. For a  differentiable function $g:\mathbb{R}^n \times \mathbb{R}^m \rightarrow \mathbb{R}$, we use $\nabla_x g$ (resp. $\nabla_y g$) to denote the derivative of $g$ with respect to the first variable (resp. second variable).
We also use $\mathbb{N}$ (resp. $\mathbb{N}_{>0})$ to denote all the nonnegative (resp. positive) integers. Moreover, for any integer $t$, let
$\mathbb{N}^n_t:=\{{\bf \alpha} \in \mathbb{N}^n: \sum_{i=1}^n \alpha_i \le t\}$. For a set $A
$ in $\mathbb{R}^n$, we use $\mathrm{cl}(A)$ and $\mathrm{int}(A)$ to denote
the closure and interior of $A$. For a given point $x$, the distance from
the point $x$ to a set $A$ is denoted by $d(x,A)$ and is defined by $%
d(x,A)=\inf\{\|x-a\|: a \in A\}$.


We say that a real polynomial $f\in \mathbb{R}[x]$ is \emph{sum-of-squares}
(SOS) if there exist real polynomials $f_{j},j=1,\ldots ,r,$ such that $%
f=\sum_{j=1}^{r}f_{j}^{2}$. The set of all sum-of-squares real polynomials
in $x$ is denoted by $\Sigma ^{2}[x].$ Moreover, the set of all
sum-of-squares real polynomials in $x$ with degree at most $d$ is denoted by
$\Sigma _{d}^{2}[x].$  We also recall some notions and results of semi-algebraic functions/sets, which can be found in \cite{Bochnak1998, Dries1996}.
\begin{definition}{\bf (Semi-algebraic sets and functions)} A subset of $\mathbb{R}^n$ is called {\em semi-algebraic} if it is a finite union of sets of the form
$\{x \in \mathbb{R}^n \ : \ f_i(x) = 0, i = 1, \ldots, k; f_i(x) > 0, i = k + 1, \ldots, p\},$
where all $f_{i}$ are real polynomials. If $A \subset \Bbb{R}^n$ and $B \subset \Bbb{R}^p$ are  semi-algebraic sets then the map $f \colon A \to B$ is said to be {\em semi-algebraic} if its graph
$\{(x, y) \in A \times B \ : \ y = f(x)\}$
is a semi-algebraic subset in $\Bbb{R}^n\times\Bbb{R}^p.$
\end{definition}

Semi-algebraic sets and functions are important classes of sets and functions and they have important applications in nonsmooth optimization (for a recent development, see \cite{Bolte}). In particular, they enjoy a number of remarkable properties. Some of these  properties, which are used later in the paper, have
been summarized in the Appendix A for the convenience of the reader.

We now present a preliminary result on H\"{o}lder continuity
of the solution mapping of the lower level problem. As a consequence, we
provide an existence result of the solution of a bilevel polynomial
optimization problem (P).
%

Let $F \colon \mathbb{R}^n \rightrightarrows \mathbb{R}^m$ be a set-valued mapping and let $\bar y \in F(\bar x)$.
Recall that $F$ is said to be H\"{o}lder continuous (calm) at $(\bar x,\bar y)$ with exponent $\tau
\in (0,1]$ if there exist $\delta,\epsilon,c>0$  such that
\begin{equation*}
d\left(y, F(\bar x) \right) \le c\,\|x-\overline{x}\|^{\tau} \mbox{ for all }
y \in F(x) \cap \mathbb{B}_{\mathbb{R}^m}(\bar y, \epsilon) \mbox{ and } x
\in \mathbb{B}_{\mathbb{R}^n}(\bar x,\delta).
\end{equation*}
In the case when $\tau=1$, this property is often refereed as calmness and
has been well-studied in nonsmooth analysis (see for example \cite%
{Clarke}). We first see that even in the case, where $G$ is a continuously differentiable function and the set $\{y
\in \mathbb{R}^m: h_j(y) \le 0\}$ is compact, the solution map $%
Y\colon \mathbb{R}^n \rightrightarrows \mathbb{R}^m$ of the lower level
problem $Y(x):=\displaystyle \mathrm{argmin}_{y \in \mathbb{R}^m}\{G(x,y):
h_j(y) \le 0,j=1,\ldots,r\}$ is not necessarily H\"{o}lder continuous for any
exponent $\tau>0$.
\begin{example}{\bf (Failure of H\"{o}lder continuity for solution map of the lower level problem: non-polynomial case)}
Let $f: \mathbb{R} \rightarrow \mathbb{R}$ be defined by
\[
f(y)=\left\{\begin{array}{ccc}
e^{-\frac{1}{y^2}} & \mbox{ if } & y \neq 0\\
0 & \mbox{ if } & y=0.
\end{array} \right.
\]
Consider the solution mapping $Y(x)={\rm argmin}_y\{G(x,y): y^2 \le 1\}$ for all $x \in [-1,1]$, where $G(x,y)=(x-f(y))^2$. Then, it can be verified that $G$ is a
continuously differentiable function (indeed it is a $C^{\infty}$ function) and
$$Y(x)=\left\{\begin{array}{lll}
\{\pm \sqrt{\frac{1}{-\ln x}}\} & \mbox{ if } & x \in (0,1], \\
\{0\}  & \mbox{ if } & x \in [-1,0].
             \end{array}
\right.$$
We now see that the solution mapping $Y$ is not H\"{o}lder continuous at $0$ with exponent $\tau$ for any $\tau \in (0,1]$. To see this, let $x_k=e^{-k} \rightarrow 0$ and $y_k=\sqrt{\frac{1}{k}} \in Y(x_k)$. Then, for any $\tau \in (0,1]$,
\[
\frac{|x_k|^{\tau}}{d(y_k, Y(0))} = \frac{e^{-\tau k}}{\sqrt{\frac{1}{k}}}=\frac{\sqrt{k}}{e^{\tau k}} \rightarrow 0.
\]
So, the solution mapping is not H\"{o}lder continuous at $0$ for any $\tau \in (0,1]$.
\end{example}

The H\"{o}lder continuity of the solution set of general parametric optimization problems
has been established under suitable regularity conditions, for example see \cite{Holder,Robinson}. This property plays an important role in
establishing the existence of solutions for bilevel programming problems and equilibrium problems (see for example \cite{existence} and Corollary \ref{cor:existence}). Next, we show that, the solution map of a lower level problem of a bilevel
polynomial optimization problem is always H\"{o}lder continuous with an explicit
exponent which depends only on the degree of the polynomial involved and the
dimension of the underlying space. This result is based on our recent
established \L ojasiewicz inequality for nonconvex polynomial systems in
\cite{Li_Mor_Pham_2013}.

For $m,d \in \mathbb{N}$, denote
\begin{equation}  \label{kappa}
\quad R(m, d) :=%
\begin{cases}
1 & \mbox{ if }\;d=1, \\
d(3d-3)^{m-1} & \mbox{ if }\;d\ge 2.%
\end{cases}%
\end{equation}

\begin{theorem}
\textbf{(H\"older continuity of solution maps in the lower level problem:
polynomial case)}\label{th:stability} \label{Theorem1} Let $h_j$, $j=1,\ldots,r$ and $G$ be polynomials with real coefficients. Denote $d :=\max\{\mathrm{deg%
}h_j, \mathrm{deg} G(x,\cdot)\}$. Suppose that $F := \{y \in
\mathbb{R}^m: h_j(y) \le 0\}$ is compact.  Then, the solution map $Y\colon \mathbb{R}%
^n \rightrightarrows \mathbb{R}^m$ in the lower level problem $Y(x):=%
\displaystyle \mathrm{argmin}_{y \in \mathbb{R}^m}\{G(x,y): h_j(y) \le
0,j=1,\ldots,r\}$ satisfies the following H\"older continuity property at
each point $\bar{x} \in \mathbb{R}^n$: for any $\delta > 0$, there is a
constant $c > 0$ such that
\begin{equation}  \label{hs}
Y(x)\subset Y(\overline{x})+ c\,\|x-\overline{x}\|^{\tau}\,\overline{\mathbb{%
B}}_{\mathbb{R}^m}(0,1)\;\mbox{ whenever }\;\|x-\overline{x}\|\le\delta,
\end{equation}
for some $\tau \in [\tau_0,1]$ with $\tau_0=\max\{\frac{1}{R(m+r+1,d+1)},\frac{%
2}{R(m+r,2d)}\}$. In particular, $Y$ is H\"{o}lder continuous at $\bar x$ with
exponent $\tau_0$ for any $\bar{x} \in \mathbb{R}^n$.
\end{theorem}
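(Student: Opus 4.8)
The plan is to reduce the set inclusion $Y(x)\subset Y(\bar x)+c\|x-\bar x\|^{\tau}\,\overline{\mathbb{B}}_{\mathbb{R}^m}(0,1)$ to the pointwise distance estimate $d(y,Y(\bar x))\le c\|x-\bar x\|^{\tau}$ holding for every $y\in Y(x)$, and then to obtain this estimate by coupling a Lipschitz bound for the optimal value function of the lower-level problem with the fractional \L ojasiewicz error bound for polynomial systems from \cite{Li_Mor_Pham_2013}.

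First I would introduce the optimal value function $v(x):=\min\{G(x,y):y\in F\}$. Since $F$ is compact (and I may assume nonempty, the claim being vacuous otherwise), $v$ is well defined by Weierstrass, $Y(x)\ne\emptyset$ for every $x$, and $Y(x)\subset F$. Restricting $x$ to $\overline{\mathbb{B}}_{\mathbb{R}^n}(\bar x,\delta)$, the polynomial $G$ has bounded $x$-gradient on the compact set $\overline{\mathbb{B}}_{\mathbb{R}^n}(\bar x,\delta)\times F$, so there is $L>0$ with $|G(x,y)-G(x',y)|\le L\|x-x'\|$ for all such $x,x'$ and all $y\in F$; a two-sided comparison of minimizers then gives $|v(x)-v(\bar x)|\le L\|x-\bar x\|$ on this ball. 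Fixing $y\in Y(x)$ with $\|x-\bar x\|\le\delta$ and writing $G(\bar x,y)-v(\bar x)=\big(G(\bar x,y)-G(x,y)\big)+\big(v(x)-v(\bar x)\big)$, I use $G(x,y)=v(x)$ together with the two Lipschitz bounds to obtain $0\le G(\bar x,y)-v(\bar x)\le 2L\|x-\bar x\|$, the lower bound coming from $y\in F$ and the definition of $v(\bar x)$.

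The decisive step is to view $Y(\bar x)$ as the solution set of a fixed polynomial system. Because the global minimum value over $F$ equals $v(\bar x)$, one has $Y(\bar x)=\{y\in\mathbb{R}^m: h_j(y)\le 0\ (j=1,\dots,r),\ G(\bar x,y)-v(\bar x)\le 0\}$, that is, $r+1$ polynomial inequalities in $m$ variables of degree at most $d$. I would then invoke the uniform fractional error bound of \cite{Li_Mor_Pham_2013}: on the bounded set $F$ there exist $c_0>0$ and an exponent $\tau\in[\tau_0,1]$, with $\tau_0=\max\{1/R(m+r+1,d+1),\,2/R(m+r,2d)\}$, such that $d(y,Y(\bar x))\le c_0\big(\sum_{j=1}^r[h_j(y)]_+ +[G(\bar x,y)-v(\bar x)]_+\big)^{\tau}$ for all $y\in F$. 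For $y\in Y(x)\subset F$ each $[h_j(y)]_+$ vanishes and $[G(\bar x,y)-v(\bar x)]_+=G(\bar x,y)-v(\bar x)$, so the bound collapses to $d(y,Y(\bar x))\le c_0\,(G(\bar x,y)-v(\bar x))^{\tau}\le c_0(2L)^{\tau}\|x-\bar x\|^{\tau}$. Taking $c:=c_0(2L)^{\tau}$ yields the pointwise estimate and hence the asserted inclusion.

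The main obstacle is this third step: securing the explicit exponent $\tau_0$. The Lipschitz estimate for $v$ and the arithmetic decomposition are routine, but the value of $\tau_0$ — and crucially its dependence only on $(m,r,d)$ and not on the coefficients of the system (which vary with $\bar x$ through $v(\bar x)$) — rests entirely on the quantitative \L ojasiewicz/error-bound theorem of \cite{Li_Mor_Pham_2013}. The two entries of the maximum correspond to its two forms: one obtained by a slack-variable reformulation of the $r+1$ inequalities, which accounts for the arguments $m+r+1$ and $d+1$, and one obtained from the squared system, which accounts for $m+r$, the degree $2d$, and the factor $2$ in the numerator. It remains only to confirm that $F$ furnishes the bounded region on which the error bound is valid, which is immediate since $Y(x)\subset F$ for every $x$, so both $c_0$ and $\tau$ may be chosen uniformly over all the points in play.
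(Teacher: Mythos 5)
Your proposal is correct and follows essentially the same route as the paper: a uniform Lipschitz bound for $G(\cdot,y)$ on $\overline{\mathbb{B}}(\bar x,\delta)\times F$ giving $G(\bar x,y)-v(\bar x)\le 2L\|x-\bar x\|$ for $y\in Y(x)$, combined with the \L ojasiewicz error bound of \cite{Li_Mor_Pham_2013} applied to the polynomial system describing $Y(\bar x)$ with the same exponent $\tau_0$. The only cosmetic difference is that you route the $2L\|x-\bar x\|$ estimate through the value function $v$, whereas the paper compares $G(\bar x,y)$ directly with $G(\bar x,\bar y)$ for the nearest $\bar y\in Y(\bar x)$; the two computations are identical in substance.
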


\begin{proof}
For any fixed $x \in\mathbb{R}^n$, define $\Phi(x)=\min_{y \in \mathbb{R}^m}\{G(x,y): h_j(y) \le 0,j=1,\ldots,r\}$  and
let
$$\Phi_x(y):= \sum_{i=1}^r\big[h_j(y)\big]_+ \ +|\Phi(x)-G(x,y)|.$$
Then, for all fixed $x$,
\begin{eqnarray*}
\{y\in\R^n|\;\Phi_x(y)=0\} &=& Y(x) \\
&=& \big\{y\in\mathbb{R}^m \big|\;  h_j(y) \le 0\;\mbox{ as }\;j=1,\ldots,s,\mbox{ and }\;\Phi(x)-G(x,y)=0\big\}.
\end{eqnarray*}
Note that $F$ is compact. Now, the \L ojasiewicz inequality for nonconvex polynomial systems \cite[Corollary 3.8]{Li_Mor_Pham_2013} gives that there is a constant $c_0  > 0$ such that
\begin{equation}\label{eq:qq}
d\big(y,Y(\overline{x})\big)\le c_0\,\Phi_{\overline{x}}(y)^{\tau}\;\mbox{ for all } y \in F,
\end{equation}
for some $\tau \in [\tau_0,1]$ with $\tau_0=\max\{\frac{1}{R(m+r+1,d+1)},\frac{2}{R(m+r,2d)}\}$. Further, there is a constant $L > 0$ such that
\begin{equation}\label{eq:assumption2}
|G(x,y)-G(\overline{x},y)|\le L \|x-\overline{x}\|
\end{equation}
for all $y\in F$ and for all $x$ with $\|x-\overline{x}\|\le\delta$.
Denote $c:=(2\beta^{-1} L)^{\tau}$ with $\beta:=c_0^{-\frac{1}{\tau}}> 0$. For any $y\in Y(x)$ we select now $\overline{y} \in Y(\overline{x})$ satisfying $\|y-\overline{y}\|=d(y,Y(\overline{x}))$.
To finish the proof, it suffices to show that
\begin{equation}\label{eq:claim}
\|y-\overline{y}\|\le c\,\|x-\overline{x}\|^{\tau}.
\end{equation}
To see this, note that
$|\Phi(\overline{x})-G(\overline{x},y)|=\Phi_{\overline{x}}(y) \ge\beta d\big(y,Y(\overline{x})\big)^{\frac{1}{\tau}}=\beta \|y-\overline{y}\|^{\frac{1}{\tau}}$.
Since $\overline{y} \in Y(\overline{x})$, it follows that $G(\overline{x},\overline{y})=\Phi(\overline{x})\le G(\overline{x},y)$, and hence
\begin{equation}\label{eq:ok}
\|y-\overline{y}\|^{\frac{1}{\tau}}\le\beta^{-1}|\Phi(\overline{x})-G(\overline{x},y)|=\beta^{-1}\big(G(\overline{x},y)-G(\overline{x},\overline{y})\big).
\end{equation}
Furthermore, as $y\in Y(x)$, $G(x,y)\le G(x,\overline{y})$, and therefore (\ref{eq:assumption2}) gives us that
\begin{eqnarray*}
G(\overline{x},y)-G(\overline{x},\overline{y})&=&\big(G(\overline{x},y)-G(x,y)\big)+\big(G(x,y)-G(x,\overline{y})\big)+\big(G(x,\overline{y})-G(\overline{x},\overline{y})\big)\\
&\le&\big(G(\overline{x},y)-G(x,y)\big)+\big(G(x,\overline{y})-G(\overline{x},\overline{y})\big)\\
&\le& 2 L \|x-\overline{x}\|\;\mbox{ as }\;y,\overline{y} \in F.
\end{eqnarray*}
This together with (\ref{eq:ok}) yields $$\|y-\overline{y}\|^{\frac{1}{\tau}}\le\beta^{-1}\big(G(\overline{x},y)-G(\overline{x},\overline{y}) \big)\le 2\beta^{-1}L \|x-\overline{x}\|.$$
Thus
$$d\big(y,Y(\overline{x})\big)=\|y-\overline{y}\|\le c\,\|x-\overline{x}\|^{\tau},$$
which verifies  (\ref{eq:claim}) and completes the proof of the theorem.
\end{proof}

In general, our lower estimate of the exponent $\tau$ will not be tight. We
present a simple example to illustrate this.
\begin{example}
Consider the solution mapping $Y(x)={\rm argmin}_{y \in \mathbb{R}}\{(x-y^2)^2: y^2 \le 1\}$ for all $x \in [-1,1]$. Clearly,
$$Y(x)=\left\{\begin{array}{lll}
\{\pm \sqrt{x}\} & \mbox{ if } & x \in [0,1], \\
\{0\}  & \mbox{ if } & x \in [-1,0).
             \end{array}
\right.$$
So, the solution mapping is H\"{o}lder continuous at $0$ with exponent $1/2$. On the other hand, our lower estimate gives $\tau_0=1/84$. So, the lower estimate is not tight.
\end{example}

\begin{corollary}{\bf (Existence of global minimizer)}\label{cor:existence}
\label{cor:1} For the bilevel polynomial optimization problem (P), let $%
K=\{(x,y) \in \mathbb{R}^n \times \mathbb{R}^m : g_i(x,y) \le 0\}$ and $F=\{w
\in \mathbb{R}^m: h_j(w) \le 0\}$. Suppose that $K_1=\{x \in \mathbb{R}^n: (x,y) \in K \mbox{ for some } y \in \mathbb{R}^m\}$ and $F$ are compact sets. Then, a global
minimizer for (P) exists.
\end{corollary}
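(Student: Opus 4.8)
The plan is to apply the Weierstrass extreme value theorem to the objective $f$ over the feasible set
$$S:=\{(x,y)\in\mathbb{R}^n\times\mathbb{R}^m:\ g_i(x,y)\le0,\ i=1,\ldots,s,\ y\in Y(x)\},$$
which is nonempty by the blanket assumption on (P). Since $f$ is a polynomial and hence continuous, it suffices to show that $S$ is compact. For boundedness, observe that if $(x,y)\in S$ then $(x,y)\in K$, so $x\in K_1$, while $y\in Y(x)\subset F$ because the lower-level argmin is taken over $F$; as both $K_1$ and $F$ are compact, $S$ is bounded. The remaining work is closedness of $S$.

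To establish closedness I would take a sequence $(x_k,y_k)\in S$ converging to some $(\bar x,\bar y)$ and show $(\bar x,\bar y)\in S$. The upper-level inequalities pass to the limit by continuity of the $g_i$, giving $g_i(\bar x,\bar y)\le0$. The genuinely delicate part is verifying $\bar y\in Y(\bar x)$, i.e. that the limit point remains a minimizer of the lower-level problem. Here I would invoke Theorem \ref{th:stability}: applying the H\"older inclusion (\ref{hs}) at the point $\bar x$ yields a constant $c>0$ with $d\big(y_k,Y(\bar x)\big)\le c\,\|x_k-\bar x\|^{\tau}$ for all large $k$, so $d\big(y_k,Y(\bar x)\big)\to0$. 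Since $Y(\bar x)$ is a nonempty compact set---a closed subset of the compact $F$, nonempty because $G(\bar x,\cdot)$ is continuous on the compact $F$---the vanishing distance forces $\bar y\in Y(\bar x)$, as required.

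Alternatively, one can bypass the full strength of Theorem \ref{th:stability} and argue directly through the optimal value function $\Phi(x):=\min_{w\in F}G(x,w)$. Because the feasible set $F$ of the lower-level problem is fixed and compact and $G$ is jointly continuous, $\Phi$ is continuous; then $G(\bar x,\bar y)=\lim_k G(x_k,y_k)=\lim_k\Phi(x_k)=\Phi(\bar x)$, and since $\bar y\in F$ this exhibits $\bar y$ as a minimizer of $G(\bar x,\cdot)$ over $F$, that is, $\bar y\in Y(\bar x)$. Either route shows $S$ is closed and bounded, hence compact, and then a convergent subsequence of any minimizing sequence for $f$ on $S$ produces a global minimizer by continuity of $f$. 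I expect the main obstacle to be exactly the stability of the argmin map under perturbation of $x$ (equivalently, the upper semicontinuity of $Y$), which is precisely the content that Theorem \ref{th:stability} is designed to supply and which fails for general smooth data as shown in the preceding example.
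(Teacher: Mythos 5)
Your main argument is correct and is essentially the paper's own proof: the paper likewise takes a (minimizing) sequence of feasible points, extracts a convergent subsequence using the compactness of $K_1$ and $F$, and then settles the only delicate point---that $\bar y\in Y(\bar x)$---by invoking the H\"older inclusion \eqref{hs} of Theorem \ref{th:stability} together with the compactness of $Y(\bar x)$ as a closed subset of $F$. The only cosmetic difference is that you first package compactness of the whole feasible set and then cite Weierstrass, whereas the paper argues directly on a minimizing sequence; the substance is identical. Your alternative route, however, is genuinely different and worth noting: because the lower-level feasible set $F$ is independent of $x$ and compact, the value function $\Phi(x)=\min_{w\in F}G(x,w)$ is continuous (one line: $|\Phi(x)-\Phi(x')|\le\max_{w\in F}|G(x,w)-G(x',w)|$), and then $G(\bar x,\bar y)=\lim_k G(x_k,y_k)=\lim_k\Phi(x_k)=\Phi(\bar x)$ with $\bar y\in F$ immediately gives $\bar y\in Y(\bar x)$. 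This bypasses Theorem \ref{th:stability} entirely and is more elementary; it exploits exactly the closedness of the graph of $Y$, which is all the corollary needs, whereas the paper deploys the quantitative H\"older estimate it has just established (a strictly stronger tool, useful elsewhere but not required here). Both routes are valid; the value-function argument is arguably the cleaner proof of this particular corollary, and is consistent with the paper's own remark that the independence of $h_j$ from $x$ makes the lower-level value function automatically continuous.
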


\begin{proof}
Denote the optimal value of problem (P) by ${\rm val}(P)$. Let $(x_k,y_k)$ be a minimizing sequence for the bilevel polynomial optimization problem (P) in the sense that
$g_i(x_k,y_k) \le 0$, $i=1,\ldots,s$, $h_j(y_k) \le 0$, $j=1,\ldots,r$, $y_k \in Y(x_k)$ and $f(x_k,y_k) \rightarrow {\rm val}(P)$.
Clearly, $(x_k,y_k) \in K$ (and so, $x_k \in K_1$) and $y_k \in F$. By passing to a subsequence, we may
assume that $(x_k,y_k) \rightarrow (\bar x,\bar y) \in K_1 \times F$. By continuity, we have $f(\bar x,\bar y)={\rm val}(P)$. To see the conclusion,
it suffices to show that $\bar y \in Y(\bar x).$
Denote $\epsilon_k=\|x_k-\bar x\| \rightarrow 0$.
 Then, by Theorem \ref{Theorem1}, there is $c > 0$ such that
$$Y(x_k) \subseteq Y(\bar x) + c\, \epsilon_k^\tau \, \Bar{\mathbb{B}}_{\mathbb{R}^m}(0,1) \quad \mbox{ for all } \quad k \in \mathbb{N}.$$
As $y_k \in Y(x_k)$, there exists $y_k' \in Y(\bar x)$ such that
\begin{equation}\label{eq:88}
\|y_k- y_k'\| \le 2c  \epsilon_k^\tau \rightarrow 0.
\end{equation}
Note that $Y(\bar x) \subseteq F$, $Y(\bar x)$ is a closed set and $F$ is compact. It follows that $Y(\bar x)$ is also a compact set. Passing to the limit in (\ref{eq:88}), we see that $\bar y \in Y(\bar x)$.
 So, a global minimizer for problem (P) exists.
\end{proof}

\bigskip

%
%

The following lemma of Putinar (\cite{Put}), which provides a
characterization for positivity of a polynomial over a system of polynomial
inequalities, can also be regarded as a polynomial analog of Farkas' lemma \cite{dinh-jeya}. This lemma has been extensively used in polynomial optimization \cite{Lasserre_book}
and plays a key role in the convergence analysis of our proposed method later on.

\begin{lemma}
\label{putinar} \textbf{(Putinar's Positivstellensatz)\cite{Put}} Let $f_0$
and $f_{i}$, $i=1,\ldots ,p$ be real polynomials in $w$ on $\mathbb{R}^v$. Suppose
that there exist $R>0$ and sums-of-squares polynomials $\hat{\sigma}_1,\ldots,\hat{\sigma}_p \in \Sigma^2[w]$
such that $R-\|w\|^2=\hat{\sigma} _{0}(w)+\sum_{i=1}^p \hat{\sigma}_{i}f_{i}(w) \mbox{ for all } w \in \mathbb{R}^v.$
If $f_0(w)>0$ over the set $\{w \in \mathbb{R}^v: f_i(w) \geq 0, i=1,\ldots,p\}$, then there exist $\sigma _{i}\in \Sigma^{2}[w]$, $i=0,1,\ldots
,p$ such that $f_0=\sigma _{0}+\sum_{i=1}^p \sigma_{i}f_{i}.$
\end{lemma}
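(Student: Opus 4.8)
The plan is to argue by contradiction, following the classical route of separating $f_0$ from the quadratic module generated by the constraints and then realizing the separating functional as integration against a measure carried by the constraint set. Put $K := \{w \in \mathbb{R}^v : f_i(w)\ge 0,\ i=1,\ldots,p\}$ and let
$$M := \Big\{\, \sigma_0 + \sum_{i=1}^p \sigma_i f_i \ : \ \sigma_0,\sigma_1,\ldots,\sigma_p \in \Sigma^2[w] \,\Big\}$$
be the quadratic module generated by $f_1,\ldots,f_p$; it is a convex cone containing every square and the constant $1$, and the desired conclusion is exactly that $f_0\in M$. First I would note that the hypothesis $R-\|w\|^2=\hat{\sigma}_0+\sum_{i=1}^p\hat{\sigma}_i f_i\in M$ forces $K\subseteq\overline{\mathbb{B}}(0,\sqrt{R})$, so $K$ is compact, and that it makes $M$ \emph{Archimedean}: the constant $1$ is an order unit, i.e.\ for every $g\in\mathbb{R}[w]$ there is an $N>0$ with $N-g\in M$. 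This last point is a short algebraic argument: the set of polynomials $g$ admitting $N\pm g\in M$ for some $N$ is a subring of $\mathbb{R}[w]$, it contains the constants, and it contains each coordinate $w_i$ because $N-w_i^2=(R-\|w\|^2)+\sum_{j\ne i}w_j^2+(N-R)\in M$ for $N\ge R$, whence it is all of $\mathbb{R}[w]$.

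Next I would assume, toward a contradiction, that $f_0\notin M$. Exploiting that $1$ is an order unit (so that $M$ has nonempty interior for the associated order-unit seminorm), a Hahn--Banach separation / Riesz extension argument produces a linear functional $L:\mathbb{R}[w]\to\mathbb{R}$ that is nonnegative on $M$, normalized by $L(1)=1$, and satisfies $L(f_0)\le 0$. In particular $L(\sigma)\ge 0$ for every $\sigma\in\Sigma^2[w]$, so $(p,q)\mapsto L(pq)$ is a positive semidefinite symmetric bilinear form on $\mathbb{R}[w]$.

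The hard part will be to represent $L$ as $L(\cdot)=\int_K(\cdot)\,d\mu$ for some positive Borel measure $\mu$ supported on $K$; this is precisely the (compact) $K$-moment problem and is where the real content sits. I would follow Putinar's operator-theoretic construction: quotient out the kernel of the form $(p,q)\mapsto L(pq)$, complete to a Hilbert space, and use the Archimedean bounds $N-w_i^2\in M$ to show that each multiplication operator $p\mapsto w_i p$ extends to a bounded self-adjoint operator. The spectral theorem then furnishes a joint spectral resolution whose associated scalar measure $\mu$ satisfies $L(g)=\int g\,d\mu$ for all $g\in\mathbb{R}[w]$, while the inequalities $L(\sigma f_i)\ge 0$ for all $\sigma\in\Sigma^2[w]$ pin the support of $\mu$ inside $K$.

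Finally, since $K$ is compact and $f_0>0$ on $K$, the number $m:=\min_{w\in K}f_0(w)$ is strictly positive, and therefore $L(f_0)=\int_K f_0\,d\mu\ge m\,\mu(K)=m\,L(1)=m>0$, contradicting $L(f_0)\le 0$. Hence $f_0\in M$, which is the asserted representation $f_0=\sigma_0+\sum_{i=1}^p\sigma_i f_i$ with each $\sigma_i\in\Sigma^2[w]$. The crux, and the step most resistant to purely elementary reasoning, is the measure representation of the third paragraph; the remaining ingredients are cone algebra, one compactness estimate, and a separation theorem.
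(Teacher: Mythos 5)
The paper does not prove this lemma: it is stated as a classical result quoted directly from Putinar \cite{Put} and used as a black box, so there is no internal argument to compare yours against. That said, your sketch correctly reconstructs the standard (essentially original) proof of Putinar's Positivstellensatz, and in the right order: the hypothesis $R-\|w\|^2\in M$ makes the quadratic module $M$ Archimedean (your subring argument for the order-unit property is the usual one); an Eidelheit/Riesz-type separation, valid because $1$ is an algebraic interior point of $M$, produces the normalized functional $L$ with $L\ge 0$ on $M$ and $L(f_0)\le 0$; the GNS construction plus the spectral theorem solves the compact $K$-moment problem and yields a representing measure supported on $K$; and positivity of $f_0$ on the compact set $K$ gives the contradiction. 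The one caveat is that your third paragraph is a deferral rather than a proof: the boundedness of the multiplication operators (which follows from applying $L$ to $p^2(N-w_i^2)\in M$) and the localization of the support of $\mu$ inside $K$ (from $L(p^2 f_i)\ge 0$ for all $p$, together with compact support of $\mu$) each require a further page of detail, and this is where the real content of \cite{Put} lies. Nothing in your reduction to that step is wrong, and the surrounding cone algebra, compactness estimate, and final inequality $L(f_0)\ge m\,L(1)>0$ are all correct.
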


The following assumption plays a key role throughout the paper.

\textbf{Assumption 2.1:} There exist $R_1,R_2>0$ such that the quadratic polynomials $%
(x,y) \mapsto R_1-\Vert (x,y)\Vert ^{2}$ and $%
y \mapsto R_2-\Vert y\Vert ^{2}$ can be written as
\[
R_1-\Vert (x,y)\Vert ^{2}=\sigma_{0}(x,y)-\sum_{i=1}^{s}\sigma_{i}(x,y)g_i(x,y) \ \
\mbox{ and }  \ \
R_2-\Vert y \Vert ^{2}= \bar \sigma_{0}(y)-\sum_{j=1}^{r}\bar \sigma
_{j}(y)h_{j}(y),
\]
for some sums-of-squares polynomials $\sigma_{0},\sigma _{1},\ldots ,\sigma
_{s}\in \Sigma^2 [x,y]$ and sums-of-squares polynomials $\bar \sigma_0,\bar \sigma_1, \ldots, \bar \sigma_r \in \Sigma^2[y]$.

We note that Assumption 2.1 implies that both $K=\{(x,y)\in \mathbb{R}%
^{n} \times \mathbb{R}^m:g_i(x,y)\leq 0,i=1,\ldots ,s\}$ and $F=\{y\in \mathbb{R}%
^{m}:h_{j}(y)\leq 0,j=1,\ldots ,r\}$ are  compact sets \cite{Lasserre_book}. Moreover,
Assumption 2.1 can be easily satisfied when $K$ and $F$ are nonempty compact
sets, and one knows the bounds $N_1$ for $\Vert x\Vert $ on $K$ and
$N_2$ for $\Vert y \Vert$ on $F$.
Indeed, in this case, it suffices to add redundant constraints $%
g_{s+1}(x,y)=\Vert (x,y)\Vert ^{2}-(N_1^{2}+N_2^2)$ and $h_{r+1}(y)=\Vert y\Vert
^{2}-N_2^{2}$ to the definition of $K$ and $F$ respectively, and Assumption
2.1 is satisfied with $R_1=N_1^{2}+N_2^2$, $R_2=N_2^2$,  $\sigma_{i} \equiv 0$ for all $1\le i \le s$, $\bar \sigma_j \equiv 0$ for all $1 \le j \le r$ and $\sigma _{s+1}=\bar \sigma _{r+1}\equiv 1$.
We also note that, under Assumption 2.1, a solution for problem (P) exists by Corollary \ref{cor:1}.

\section{Convex Lower Level Problems}


In this section, we consider the convex polynomial bilevel programming
problem $(P)$ where the lower level problem is convex in the sense that,
for each $x\in \mathbb{R}^{n}$, $G(x,\cdot )$ is a convex polynomial, $h_{j}$
are polynomials, $j=1,\ldots ,r$, and the feasible set of lower level problem $F:=\{w\in \mathbb{R}%
^{m}\ :\ h_{j}(w)\leq 0,j=1,\ldots ,r\}$ is a convex set. We note that, the
representing polynomials $h_{j}$ which describes the convex feasible set $F$
need not to be convex, in general. 


We say that the lower level convex problem of $(P)$ satisfies the \textit{%
nondegeneracy condition} if for each $j=1,\ldots ,r$,
\begin{equation*}
y\in F\mbox{ and }h_{j}(y)=0\ \Rightarrow \ \nabla h_{j}(y)\neq 0.
\end{equation*}%
Recall that the lower level convex problem of $(P)$ is said to satisfy the
\textit{Slater condition} whenever there exists $y_{0}\in \mathbb{R}^{m}$
such that $h_{j}(y_{0})<0$, $j=1,\ldots ,r$. Note that, under the Slater
condition, the lower level problem automatically satisfies the nondegeneracy
condition if each $h_{j}$, $j=1,\ldots ,r$ is a convex polynomial.



Let us recall
a lemma which provides a link between a KKT point and a minimizer for a
convex optimization problem where the representing function of the convex feasible
set is not necessarily convex.

\begin{lemma}
(\cite[Theorem 2.1]{lasOptl}) Let $\phi$ be a convex function on $\mathbb{R}^m$ and $F:=\{w \in
\mathbb{R}^m: h_j(w) \le 0, j=1,\ldots,r\}$ be a convex set. Suppose that both the
nondegeneracy condition and the Slater condition hold. Then, a point $y$
is a global minimizer of $\min\{\phi(w): w \in F\}$ if and only if $y$ is a
KKT point of $\min\{\phi(w): w \in F\}$, in the sense that, there exist $\lambda_j \ge 0$, $j=1,\ldots,r$ such that
\[
\nabla \phi (y)+\sum_{j=1}^r \lambda_j \nabla h_j(y)=0, \  \lambda_j
h_j(y)=0,  \, h_j(y) \le 0, \, j=1,\ldots,r.
\]
\end{lemma}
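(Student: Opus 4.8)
The plan is to prove the two implications separately, observing that the sufficiency direction (a KKT point is a global minimizer) is the elementary convex one, while the necessity direction (a global minimizer is a KKT point) carries all the difficulty: since the representing functions $h_j$ are not assumed convex, no standard convex constraint qualification is available, and the roles of the nondegeneracy and Slater conditions will be exactly to supply a substitute.

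For sufficiency, suppose $y$ is a KKT point with multipliers $\lambda_j \ge 0$. The key preliminary observation I would isolate is that for every active index $j$ (i.e. $h_j(y)=0$) and every $w \in F$, convexity of $F$ forces the whole segment $y+t(w-y)$, $t\in[0,1]$, to lie in $F$; hence the one-variable function $t \mapsto h_j(y+t(w-y))$ is nonpositive on $[0,1]$, vanishes at $t=0$, and therefore has nonpositive right derivative, which gives $\langle \nabla h_j(y), w-y\rangle \le 0$. Feeding the KKT relation $\nabla\phi(y) = -\sum_j \lambda_j \nabla h_j(y)$ into this, together with $\lambda_j\ge 0$ and complementary slackness (so only active indices contribute), yields $\langle \nabla \phi(y), w-y\rangle \ge 0$ for all $w\in F$; the convex subgradient inequality $\phi(w)\ge \phi(y)+\langle \nabla\phi(y),w-y\rangle$ then gives $\phi(w)\ge \phi(y)$. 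Note this direction uses only convexity of $\phi$ and of $F$, not the two standing hypotheses.

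For necessity, I would recast everything in terms of cones. Since $y$ minimizes the convex function $\phi$ over the convex set $F$, the first-order optimality condition reads $-\nabla\phi(y)\in N_F(y)$, the normal cone to $F$ at $y$. The goal is to show $N_F(y)=\cone\{\nabla h_j(y): j\in A(y)\}$, where $A(y)$ is the active set, after which Farkas' lemma produces the multipliers. Writing $T_F(y)$ for the tangent cone and $L(y)=\{d:\langle\nabla h_j(y),d\rangle\le 0,\ j\in A(y)\}$ for the linearized cone, it suffices to establish the Abadie-type identity $T_F(y)=L(y)$ and pass to polars. The inclusion $T_F(y)\subseteq L(y)$ is immediate from the segment computation of the previous paragraph. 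The reverse inclusion $L(y)\subseteq T_F(y)$ is the crux, and it is here that both hypotheses enter. First I would use the Slater condition to pick $y_0$ with $h_j(y_0)<0$ for all $j$, so that $y_0\in\inte F$; then, since each $\nabla h_j(y)$ with $j\in A(y)$ is a \emph{nonzero} (by nondegeneracy) element of $N_F(y)$ and $y_0$ is interior, a short normal-cone argument gives the strict inequality $\langle\nabla h_j(y),\,y_0-y\rangle<0$. For $d\in L(y)$ the perturbed direction $d_\epsilon:=d+\epsilon(y_0-y)$ then satisfies $\langle\nabla h_j(y),d_\epsilon\rangle<0$ for every active $j$, so a first-order expansion of each $h_j$ shows $y+t\,d_\epsilon\in F$ for all small $t>0$ (inactive constraints stay negative by continuity); hence $d_\epsilon\in T_F(y)$, and letting $\epsilon\downarrow 0$ with $T_F(y)$ closed gives $d\in T_F(y)$.

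Finally, from $T_F(y)=L(y)$ and polarity I would conclude $N_F(y)=L(y)^\circ=\cone\{\nabla h_j(y):j\in A(y)\}$, so $-\nabla\phi(y)=\sum_{j\in A(y)}\lambda_j\nabla h_j(y)$ with $\lambda_j\ge 0$; setting $\lambda_j=0$ for inactive $j$ recovers the full KKT system, complementary slackness included. The hard part will be precisely the inclusion $L(y)\subseteq T_F(y)$: without convexity of the $h_j$ the usual linearization argument fails, and it is the combination of Slater (an interior point $y_0$) and nondegeneracy (nonvanishing active gradients) that rescues the strict estimate $\langle\nabla h_j(y),y_0-y\rangle<0$ needed to push the perturbed directions $d_\epsilon$ into the tangent cone.
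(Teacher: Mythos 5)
Your proof is correct. Note, however, that the paper does not actually prove this lemma: it is quoted from \cite[Theorem 2.1]{lasOptl}, so there is no internal proof to compare against, and what you have written is in effect a self-contained reconstruction of the cited result. Every step checks out. The sufficiency direction correctly uses only convexity of $\phi$ and of $F$, via the observation that $t\mapsto h_j(y+t(w-y))$ is nonpositive on $[0,1]$ and vanishes at $t=0$, so that $\langle\nabla h_j(y),w-y\rangle\le 0$ for every $w\in F$ and every active $j$. The necessity direction correctly isolates the crux as the inclusion $L(y)\subseteq T_F(y)$ and supplies it by combining Slater (an interior point $y_0$, hence a candidate direction $y_0-y$) with nondegeneracy (the active gradients are nonzero normals to the convex set $F$, whence $\langle\nabla h_j(y),y_0-y\rangle<0$); this is exactly the Mangasarian--Fromovitz-to-Abadie mechanism, and the passage to multipliers via $N_F(y)=L(y)^\circ=\cone\{\nabla h_j(y):j\in A(y)\}$ is sound because finitely generated cones are closed. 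The one step you leave compressed is the ``short normal-cone argument'' for the strict inequality; it deserves an explicit line: if $v\in N_F(y)\setminus\{0\}$ and $\langle v,y_0-y\rangle=0$, take $w=y_0+\rho v/\|v\|\in F$ for small $\rho>0$ to get $\langle v,w-y\rangle=\rho\|v\|>0$, contradicting $v\in N_F(y)$. It is worth noting that this same segment-plus-interior-point computation is the one the paper does carry out explicitly inside the proof of Proposition \ref{prop:1} to show $\lambda_0\neq 0$, so your argument is consistent with how the paper exploits the two hypotheses elsewhere.
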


We see in the following proposition that a polynomial bilevel programming
problem with convex lower level problem can be equivalently rewritten as a
single level polynomial optimization problem in a higher dimension under the
nondegeneracy condition and the Slater condition. In the special case where all the representing
polynomials $h_j$ are convex, this lemma has been established in \cite{Dempe1}.

\begin{proposition}{\bf (Equivalent single-level problem)}
\label{prop:1}
Consider problem (P) where the lower level problem is convex.
Suppose that the lower level problem satisfies
both the nondegeneracy condition and the Slater condition. Then, $(x,y) \in
\mathbb{R}^n \times \mathbb{R}^m$ is a global solution of the bilevel
polynomial optimization problem (P) if and only if there exist Lagrange multipliers\footnote{Indeed, as shown in the proof, $\lambda_0 \neq 0$ always holds under our assumptions. See Remark \ref{remark:2} for a detailed discussion.} $%
\lambda=(\lambda_0,\ldots,\lambda_r) \in \mathbb{R}^{r+1}$  such that $%
(x,y,\lambda) \in \mathbb{R}^n \times \mathbb{R}^m \times \mathbb{R}^{r+1}$
is a global solution of the following single level polynomial optimization
problem:
\begin{eqnarray}  \label{eq:77}
(\overline{P}) &\displaystyle \min_{x \in \mathbb{R}^n, y \in \mathbb{R}%
^m,\lambda \in \mathbb{R}^{r+1}}& f(x,y)  \notag \\
& \mbox{subject to } & g_i(x,y) \le 0, \ i=1,\ldots,s, \\
& & \lambda_0 \nabla_y G(x,y)+\sum_{j=1}^r \lambda_j \nabla h_j(y)=0,  \notag
\\
& & \lambda_0 \ge 0,\, \sum_{j=0}^r \lambda_j^2 =1,\, \lambda_j h_j(y)=0, \,
\lambda_j \ge 0, \, h_j(y) \le 0, \, j=1,\ldots,r.  \notag
\end{eqnarray}
\end{proposition}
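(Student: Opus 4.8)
The plan is to use the preceding Lemma of \cite{lasOptl} to replace the lower-level optimality constraint $y\in Y(x)$ by an algebraic KKT system, and then to homogenize that system into the normalized form appearing in $(\overline{P})$. Since the objective $f(x,y)$ is unchanged and depends only on $(x,y)$, it then suffices to show that the feasible set of $(P)$ coincides, after projecting out the multiplier $\lambda$, with the feasible set of $(\overline{P})$; the correspondence of global minimizers follows at once.

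First I would fix $x$ and record that, because $G(x,\cdot)$ is convex and $F$ is a convex set satisfying both the nondegeneracy and the Slater conditions, the cited Lemma applies with $\phi=G(x,\cdot)$. Hence $y\in Y(x)$ if and only if $y$ is a KKT point, i.e. there exist $\mu_j\ge 0$ with $\nabla_y G(x,y)+\sum_{j=1}^r\mu_j\nabla h_j(y)=0$, $\mu_j h_j(y)=0$ and $h_j(y)\le 0$. To pass to the constraints of $(\overline{P})$ I would simply normalize: setting $\lambda:=(1,\mu_1,\dots,\mu_r)/\|(1,\mu_1,\dots,\mu_r)\|$ produces a vector with $\lambda_0>0$, $\sum_{j=0}^r\lambda_j^2=1$, $\lambda_j\ge 0$, satisfying the stationarity and complementarity relations of $(\overline{P})$. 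Conversely, any feasible triple of $(\overline{P})$ with $\lambda_0>0$ yields, after dividing by $\lambda_0$, admissible KKT multipliers $\mu_j=\lambda_j/\lambda_0$, so that $y\in Y(x)$ again by the Lemma.

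The crux of the argument --- and the step I expect to be the main obstacle --- is to show that \emph{every} feasible point of $(\overline{P})$ has $\lambda_0\neq 0$ (this is the claim in the footnote). Suppose, for contradiction, that $(x,y,\lambda)$ is feasible with $\lambda_0=0$. Then $\sum_{j=1}^r\lambda_j\nabla h_j(y)=0$ with $\lambda_j\ge 0$, $\lambda_j h_j(y)=0$ and $\sum_{j=1}^r\lambda_j^2=1$; in particular the active index set $J:=\{j:\lambda_j>0\}$ is nonempty and $h_j(y)=0$ for $j\in J$. I would first verify that for each active $j$ the gradient $\nabla h_j(y)$ is an outer normal to the convex set $F$ at $y$: for any $w\in F$ the segment $t\mapsto y+t(w-y)$ stays in $F$, so the one-sided derivative of $t\mapsto h_j(y+t(w-y))$ at $t=0$ is nonpositive, i.e. $\langle\nabla h_j(y),w-y\rangle\le 0$. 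Using the Slater point $y_0$, which by continuity of the $h_j$ lies in the interior of $F$, this outer-normal property upgrades to a strict inequality: one finds $\varepsilon>0$ with $\langle v,y_0-y\rangle\le-\varepsilon\|v\|$ for every $v$ in the normal cone, and since nondegeneracy guarantees $\nabla h_j(y)\neq 0$, we get $\langle\nabla h_j(y),y_0-y\rangle<0$ for all $j\in J$. Pairing the identity $\sum_{j\in J}\lambda_j\nabla h_j(y)=0$ with $y_0-y$ then forces $\sum_{j\in J}\lambda_j\langle\nabla h_j(y),y_0-y\rangle=0$, a sum of strictly negative terms with positive weights --- a contradiction.

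Combining these steps, $(x,y)$ is feasible for $(P)$ exactly when some $(x,y,\lambda)$ is feasible for $(\overline{P})$, and since $\lambda$ does not enter the objective, the global minimizers of the two problems are in the stated correspondence. The only genuinely delicate point is the interior/outer-normal argument of the previous paragraph, which is where convexity of $F$, nondegeneracy, and the interiority of the Slater point are all used simultaneously; the remaining equivalences are routine normalizations.
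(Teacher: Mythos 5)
Your proposal is correct and follows essentially the same route as the paper: apply the cited Lemma to characterize $y\in Y(x)$ via the KKT system, normalize the multipliers to obtain the constraints of $(\overline{P})$, and rule out $\lambda_0=0$ by combining the Slater point with nondegeneracy, using convexity of $F$ to show that the active gradients $\nabla h_j(y)$ are outer normals to $F$ at $y$. The only (cosmetic) difference is in how the final contradiction is reached: the paper deduces $\nabla h_j(y)=0$ for $j\in J$ (contradicting nondegeneracy), while you invoke nondegeneracy first to get the strict inequality $\langle\nabla h_j(y),y_0-y\rangle<0$ and contradict the vanishing of the weighted sum --- both arguments are valid and rest on the same geometric observation.
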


\begin{proof}
Fix any $x \in \mathbb{R}^n$.  
The conclusion will follow if we show that $y \in Y(x)$  is equivalent to the condition that there exist $\lambda_j \ge 0$, $j=0,1,\ldots,r$ such that
\begin{eqnarray}\label{eq:0098}
& & \lambda_0 \nabla_y G(x,y)+\sum_{j=1}^r \lambda_j \nabla h_j(y)=0, \nonumber  \\
& & \, \lambda_j h_j(y)=0, \, \lambda_j \ge 0, \, h_j(y) \le 0, \, j=1,\ldots,r,  \\
& & \lambda_0 \ge 0, \sum_{j=0}^r \lambda_j^2 =1. \nonumber
\end{eqnarray}
To see the equivalence, we first assume that $y \in Y(x)$.  Under both the nondegeneracy condition and the Slater condition,
the preceding Lemma
 guarantees that  there exist $\mu_j \ge 0$, $j=1,\ldots,r$, such that
\begin{eqnarray}\label{eq:009}
\nabla_y G(x,y)+\sum_{j=1}^r \mu_j \nabla h_j(y)=0, \
\mu_j h_j(y)=0, \, \mu_j \ge 0, \, h_j(y) \le 0, \, j=1,\ldots,r.
\end{eqnarray}
So, (\ref{eq:0098}) holds with $\lambda_0=\frac{1}{\sqrt{1+\sum_{j=1}^r \mu_j^2}}$ and $\lambda_j=\frac{\mu_j}{\sqrt{1+\sum_{j=1}^r \mu_j^2}}$, $j=1,\ldots,r$.

Conversely, let $(x,y,\lambda)$ satisfy (\ref{eq:0098}). 
We now show that $\lambda_0 \neq 0$. Indeed, assume on the contrary that $\lambda_0=0$. Then, $\sum_{j=1}^r \lambda_j^2=1$, $\sum_{j=1}^r \lambda_j \nabla h_j(y)=0$, $\lambda_j h_j(y)=0$,  $\lambda_j \ge 0$ and $h_j(y) \le 0$  $j=1,\ldots,r$.
 Let $J=\{j \in \{1,\ldots,r\}: \lambda_j >0\} \neq \emptyset$. From the Slater condition,
there exists $y_0 \in \mathbb{R}^m$ such that $h_j(y_0)<0$, $j=1,\ldots,r$. Then, there exists $\rho>0$ such that $h_j(w)<0$ for all $w \in \mathbb{R}^m$ with $\|w-y_0\|\le \rho$. As $\sum_{j=1}^r \lambda_j \nabla h_j(y)=0$, we obtain
\begin{equation}\label{eq:oo0}
\sum_{j \in J}\lambda_j \nabla h_j(y)^T(w-y)=0 \mbox{ for all } w \mbox{ with } \|w-y_0\|\le \rho.
\end{equation}
We now see that $\nabla h_j(y)^T(w-y) \le 0$ for all $w$ with $\|w-y_0\|\le \rho$ and for all $j \in J$. (Suppose on the contrary that there exists $w_0$ with $\|w_0-y_0\|\le \rho$
and $j_0 \in J$ such that $\nabla h_{j_0}(y)^T(w_0-y)>0$. By continuity, for all small $t$, $h_{j_0}(y+t(w_0-y))>0$, and hence $y+t(w_0-y) \notin F$.  On the other hand, from our choice of $\rho$, we see that $h_j(w_0)<0$ for all $j=1,\ldots,r$. So, $w_0 \in F$. It then follows from the convexity of $F$ that
$y+t(w_0-y) \in F$ for all small $t$. This is impossible.) This together with (\ref{eq:oo0}) and $\lambda_j=0$ for all $j \notin J$ shows that
\[
\nabla h_j(y)^T(w-y)=0 \mbox{ for all } w \mbox{ with } \|w-y_0\|\le \rho \mbox{ and } j \in J,
\]
and so, $\nabla h_j(y)=0$ for all $j \in J$. Note that $y \in F$ and $h_j(y)=0$ for all $j \in J$. This contradicts the non-degeneracy condition, and so, $\lambda_0 \neq 0$.
%
%
Thus, by dividing $\lambda_0$ on both sides of the first relation of (\ref{eq:0098}), we see that (\ref{eq:009}) holds. This shows that $y \in Y(x)$ by the preceding lemma again.
\end{proof}
\begin{remark}\label{remark:2}{\bf (Importance of nondegeneracy and Slater's conditions)}
In Proposition \ref{prop:1}, we require that the nondegeneracy condition and the Slater condition hold. These assumptions provide us a simple uniform bound
for the multipliers $\lambda_0,\ldots,\lambda_r$ in the lower level problem which plays an important role in our convergence analysis later in Theorem \ref{th:90}. Indeed, these assumptions ensure that $\lambda_0 \neq 0$, and so, in particular,
 the equivalence of the following two systems:
 \begin{equation}
{\small \left[\begin{array}{c}
 \displaystyle \lambda_0 \nabla_y G(x,y)+\sum_{j=1}^r \lambda_j \nabla h_j(y)=0,  \notag
\\
\displaystyle \lambda_0 \ge 0,\, \lambda_j h_j(y)=0, \,
\lambda_j \ge 0, \, h_j(y) \le 0, \, j=1,\ldots,r.  \notag \\
\displaystyle \sum_{j=0}^r \lambda_j^2 =1, \notag
       \end{array}
 \right]  \ \Leftrightarrow \   \left[\begin{array}{c}
 \displaystyle \nabla_y G(x,y)+\sum_{j=1}^r \mu_j \nabla h_j(y)=0,  \notag
\\
 \displaystyle \mu_j h_j(y)=0, \,
\mu_j \ge 0, \, h_j(y) \le 0, \, j=1,\ldots,r.  \notag
       \end{array}
 \right]}
 \end{equation}
\end{remark}

Note that the non-degeneracy condition is satisfied when the representing
functions $h_j$, $j=1,\ldots,r$, are convex polynomials and the Slater condition holds. Thus, in this special case, the Slater condition alone
is enough for transforming the polynomial bilevel problem with a
convex lower level problem to a single-level polynomial optimization
problem.

The following simple example illustrates that the preceding
Proposition can be applied to the case where $h_j$'s need not be convex polynomials.
\begin{example}Consider the bilevel problem
\begin{eqnarray*}
(EP_1) &\displaystyle \min_{x \in \mathbb{R}, y \in \mathbb{R}^2}& -x^6+y_1^2+y_2^2 \\
& \mbox{subject to } & x^2+y_1^2+y_2^2 \le 2 \\
& & y \in Y(x):=\displaystyle {\rm argmin}_{w \in \mathbb{R}^2}\{x (w_1+w_2): 1-w_1w_2 \le 0, 0 \le w_1 \le 1, 0 \le w_2 \le 1 \}.
\end{eqnarray*}
Clearly, the lower level problem of $(EP_1)$ is convex but the polynomial $(w_1,w_2) \mapsto 1-w_1w_2$ is not convex. It can be verified that the non-degeneracy condition and Slater condition hold, and so, $(EP_1)$ is equivalent to the following single level polynomial optimization problem
 \begin{eqnarray*}
 &\displaystyle \min_{x \in \mathbb{R}, y \in \mathbb{R}^2,(\lambda_0,\ldots,\lambda_5) \in \mathbb{R}^6}& -x^6+y_1^2+y_2^2 \\
& \mbox{subject to } & x^2+y_1^2+y_2^2 \le 2 \\
& & 1-y_1y_2 \le 0, 0 \le y_1 \le 1, 0\le y_2 \le 1 \\
& & \lambda_0 x+\lambda_1(-y_2)-\lambda_2+\lambda_3=0 \\
& & \lambda_0 x+\lambda_1(-y_1)-\lambda_4+\lambda_5=0 \\
& & \lambda_1(1-y_1y_2)=0,\, \lambda_2 y_1=0, \, \lambda_3 (1-y_1)=0 \\
& & \lambda_4 y_2=0, \  \lambda_5 (1-y_2)=0 \\
& & \lambda_j \ge 0, j=0,1,\ldots,5, \sum_{j=0}^5 \lambda_j^2=1.
\end{eqnarray*}
\end{example}

Proposition \ref{prop:1} enables us to construct a sequence of semidefinite
programming problems for solving a polynomial bilevel programming problem
with a convex lower level problem. To do this,  we denote
\begin{eqnarray*}
\widehat{G}_p(x,y,\lambda)=\left\{%
\begin{array}{ll}
g_p(x,y) & p=1,\ldots,s, \\
h_{p-s}(y) & p=s+1,\ldots,s+r, \\
-\lambda_{p-(s+r+1)} & p=s+r+1,\ldots , s+2r+1,%
\end{array}
\right.
\end{eqnarray*}
and
\begin{eqnarray*}
\widehat{H}_q(x,y,\lambda)=\left\{%
\begin{array}{ll}
\lambda_q h_q(y), & q=1,\ldots,r \\
\displaystyle \left(\lambda_0 \nabla_y G(x,y)+\sum_{j=1}^r \lambda_j \nabla h_j(y)\right)_{q-r},
& q=r+1, \ldots,r+m \\
\displaystyle \sum_{j=0}^r \lambda_j^2-1, & q=r+m+1,%
\end{array}%
\right.
\end{eqnarray*}
where $\left(\lambda_0 \nabla_y G(x,y)+\sum_{j=1}^r \lambda_j \nabla h_j(y)\right)_{i}$ is the $i$th coordinate of $\lambda_0 \nabla_y G(x,y)+\sum_{j=1}^r \lambda_j \nabla h_j(y)$, $i=1,\ldots,m$.
We also denote the degree of $\widehat{G}_p$ to be $u_p$ and the degree of $%
\widehat{H}_q$ to be $v_q$ .

We now introduce a sequence of sums-of-squares relaxation problems as follows:
for each $k \in \mathbb{N}$,
\begin{eqnarray}  \label{eq:3.6}
(Q_k) &\max_{\mu, \sigma_p} & \mu \\
&\mbox{ s.t. } & f- \mu =\sigma_0-\sum_{p=1}^{s+2r+1} \sigma_p \widehat{G}%
_p-\sum_{q=1}^{r+m+1} \phi_q \widehat{H}_q  \notag \\
& & \sigma_p \in \Sigma^2[x,y, \lambda], \ p=0,1,\ldots,s+2r+1,  \notag \\
& & \mathrm{deg} \sigma_0 \le 2k, \ \mathrm{deg} (\sigma_p \widehat{G}_p)
\le 2k, p=1,\ldots,s+2r+1,  \notag \\
& & \phi_q \in \mathbb{R}[x,y,\lambda], \, q=1,\ldots,r+m+1, \ \mathrm{deg}
(\phi_q \widehat{H}_q) \le 2k, q=1,\ldots,r+m+1.  \notag
\end{eqnarray}
It is  known that each $(Q_k)$ can be reformulated as a semidefinite programming problem  \cite{Lasserre_book}.

\begin{theorem}\label{th:90}
\textbf{(Convex lower level problem: Convergence theorem)}
Consider the problem (P) where the lower level problem is convex.
Suppose that Assumption 2.1 holds and that the lower level problem satisfies
both the nondegeneracy condition and the Slater condition. Then,  ${\rm val}(Q_{k}) \le {\rm val}(Q_{k+1})$ for all $k \in \mathbb{N}$ and $%
\mathrm{val}(Q_{k})\rightarrow \mathrm{val}(P)$ as $k \rightarrow \infty$, where ${\rm val}(Q_k)$ and ${\rm val}(P)$ denote the optimal value of the problems $(Q_k)$ and (P) respectively. 
\end{theorem}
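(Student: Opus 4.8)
The plan is to combine the equivalence established in Proposition~\ref{prop:1} with the standard convergence theory of the Lasserre hierarchy, the latter resting on Putinar's Positivstellensatz (Lemma~\ref{putinar}). By Proposition~\ref{prop:1}, under the nondegeneracy and Slater conditions the bilevel problem (P) and the single-level polynomial program $(\overline{P})$ share the same optimal value, so ${\rm val}(P)={\rm val}(\overline{P})$, and it suffices to prove that ${\rm val}(Q_k)\to{\rm val}(\overline{P})$ monotonically from below.

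First I would dispose of the two easy inclusions. Monotonicity ${\rm val}(Q_k)\le{\rm val}(Q_{k+1})$ is immediate: any multipliers feasible for $(Q_k)$ satisfy the degree bound $2k\le 2(k+1)$ and hence remain feasible for $(Q_{k+1})$. For the upper bound ${\rm val}(Q_k)\le{\rm val}(\overline{P})$ I would use weak duality: given any feasible $(\mu,\sigma_p,\phi_q)$ for $(Q_k)$ and any feasible point $(x,y,\lambda)$ of $(\overline{P})$, evaluate the defining identity $f-\mu=\sigma_0-\sum_p\sigma_p\widehat{G}_p-\sum_q\phi_q\widehat{H}_q$. Since $\sigma_0,\sigma_p$ are sums of squares, $\widehat{G}_p(x,y,\lambda)\le 0$ and $\widehat{H}_q(x,y,\lambda)=0$ on the feasible set, the right-hand side is nonnegative, whence $\mu\le f(x,y)$; minimizing over feasible points gives $\mu\le{\rm val}(\overline{P})$.

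The substance of the argument is the reverse inequality. Fix $\epsilon>0$. Since (P) has a global minimizer under Assumption 2.1 (Corollary~\ref{cor:1}), the feasible set of $(\overline{P})$ is nonempty and ${\rm val}(\overline{P})$ is finite, so the polynomial $f-({\rm val}(\overline{P})-\epsilon)$ is strictly positive on the feasible set of $(\overline{P})$. I would then apply Putinar's Positivstellensatz to the constraint system of $(\overline{P})$, writing each equality constraint $\widehat{H}_q=0$ as the two inequalities $\pm\widehat{H}_q\ge 0$ (so that the resulting sum-of-squares multipliers combine into the arbitrary polynomial multipliers $\phi_q$ appearing in $(Q_k)$). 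This yields a representation $f-({\rm val}(\overline{P})-\epsilon)=\sigma_0-\sum_p\sigma_p\widehat{G}_p-\sum_q\phi_q\widehat{H}_q$ with $\sigma_0,\sigma_p\in\Sigma^2[x,y,\lambda]$; because such a representation has finite degree, say bounded by $2k_0$, the value $\mu={\rm val}(\overline{P})-\epsilon$ is feasible for $(Q_k)$ for all $k\ge k_0$, giving ${\rm val}(Q_k)\ge{\rm val}(\overline{P})-\epsilon$. Letting $\epsilon\downarrow 0$ and combining with the upper bound finishes the proof.

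The main obstacle is verifying the hypothesis of Putinar's Positivstellensatz, namely that the quadratic module generated by the constraints of $(\overline{P})$ is Archimedean, i.e. that $N-\|(x,y,\lambda)\|^2$ admits a suitable representation for some $N>0$. Here I would split $N-\|(x,y,\lambda)\|^2=(R_1-\|(x,y)\|^2)+(N-R_1-\|\lambda\|^2)$: the first summand lies in the module by Assumption 2.1 (the certificate there, viewed in the variables $(x,y,\lambda)$), while the second is controlled by the normalization constraint $\widehat{H}_{r+m+1}=\sum_{j=0}^r\lambda_j^2-1=0$, since $N-R_1-\|\lambda\|^2=(N-R_1-1)-\widehat{H}_{r+m+1}$ is a nonnegative constant (for $N\ge R_1+1$) minus a multiple of an equality constraint. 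This is precisely where Assumption 2.1 and the normalization $\sum_{j=0}^r\lambda_j^2=1$ are both essential; the nondegeneracy and Slater conditions enter only through Proposition~\ref{prop:1}, which guarantees the equivalence ${\rm val}(P)={\rm val}(\overline{P})$.
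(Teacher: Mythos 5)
Your proposal is correct and follows essentially the same route as the paper: reduce to the single-level problem $(\overline{P})$ via Proposition \ref{prop:1}, apply Putinar's Positivstellensatz to $f-({\rm val}(\overline{P})-\epsilon)$ with the equalities split into pairs of inequalities, and verify the Archimedean condition by combining the certificate from Assumption 2.1 with the normalization $\sum_{j=0}^r\lambda_j^2=1$. The only cosmetic difference is that the paper's Archimedean verification adds both halves of Assumption 2.1 (the $R_1$ and $R_2$ certificates) before invoking the normalization constraint, whereas you use only the $R_1$ certificate plus the normalization; both work.
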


\begin{proof} From Corollary \ref{cor:1}, a global solution of (P) exists. Let $(x,y)$ be a global solution of $(P)$.  From Proposition \ref{prop:1}, there exists $\lambda \in \mathbb{R}^{r+1}$ such that $(x,y,\lambda)$ is a solution of $(\overline{P})$
and ${\rm val}(P)={\rm val}(\overline{P})$.

From the construction of $(Q_k)$, $k \in \mathbb{N}$, it can be easily verified that ${\rm val}(Q_{k}) \le {\rm val}(Q_{k+1}) \le {\rm val}(\overline{P})$ for all $k \in \mathbb{N}$. Let $\epsilon>0$.  Define $\hat{f}(x,y,\lambda)=f(x,y)- ({\rm val}(\bar P)-\epsilon)$.  Note that the feasible set $U$ of $(\overline{P})$ can be written as
\begin{eqnarray*}
U=\{(x,y,\lambda) \in \mathbb{R}^n \times \mathbb{R}^m \times \mathbb{R}^{r+1}& : &  -\hat{G}_p(x,y,\lambda) \ge 0, \ p=1,\ldots,s+2r+1, \\
& & -\hat{H}_q(x,y,\lambda) \ge 0, \hat{H}_q(x,y,\lambda) \ge 0,  \ q=1,\ldots,r+m+1\}.
\end{eqnarray*}
Then, we see that $\hat{f}>0$ over $U$.
We now verify that the conditions in  Putinar's Positivstellensatz (Lemma \ref{putinar})  are satisfied. To see this, from Assumption 2.1, there exist $R_1,R_2>0$ such that
\[
R_1-\Vert (x,y)\Vert ^{2}=\sigma _{0}(x,y)-\sum_{i=1}^{s}\sigma _{i}(x,y)g_i(x,y) \ \
\mbox{ and }  \ \
R_2-\Vert y \Vert ^{2}= \bar \sigma_{0}(y)-\sum_{j=1}^{r}\bar \sigma
_{j}(y)h_{j}(y),
\]
for some sums-of-squares polynomials $\sigma _{0},\sigma _{1},\ldots ,\sigma
_{s}\in \Sigma^2 [x,y]$ and sums-of-squares polynomials $\bar \sigma_0,\bar \sigma_1, \ldots, \bar \sigma_r \in \Sigma^2[y]$.
Letting $\lambda=(\lambda_0,\lambda_1,\ldots,\lambda_r) \in \mathbb{R}^{r+1}$, we obtain that
\begin{eqnarray*}
(1+R_1+R_2)-\|(x,y,\lambda)\|^2& =& \left(\sigma _{0}(x,y)+ \bar \sigma_{0}(y)\right)-\sum_{j=1}^r \bar \sigma_j(y) h_j(y)-\sum_{i=1}^s \sigma_{i}(x,y) g_i(x,y)+ (1- \sum_{j=0}^r \lambda_j^2) \\
& = & \left(\sigma _{0}(x,y)+ \bar \sigma_{0}(y)\right)-\sum_{j=1}^r \bar \sigma_j(y) \hat{G}_{s+j}(x,y,\lambda)\\
& & -\sum_{i=1}^s \sigma_{i}(x,y) \hat{G}_i(x,y)- \hat{H}_{r+m+1}(x,y,\lambda).
\end{eqnarray*}
So, applying Putinar's Positivstellensatz (Lemma \ref{putinar}) with
$w=(x,y,\lambda) \in \mathbb{R}^{m}\times \mathbb{R}^n \times \mathbb{R}^{r+1}$,  there exist sums of squares polynomials $\sigma_p \in \Sigma^2[x,y,\lambda]$, $p=0,1,\ldots,s+2r+1$ and sums-of-squares polynomials $\phi_{1q}, \phi_{2q} \in \Sigma^2[x,y,\lambda]$, $q=1,\ldots,r+m+1$ such that
\[
\hat{f}=\sigma_0-\sum_{p=1}^{s+2r+1} \sigma_p \hat{G}_p-\sum_{q=1}^{r+m+1} \phi_{1q} \hat{H}_q+\sum_{q=1}^{r+m+1} \phi_{2q} \hat{H}_q.
\]
Let $\phi_q \in \mathbb{R}[x,y,\lambda]$ be a real polynomial defined by $\phi_q=\phi_{1q}-\phi_{2q}$, $q=1,\ldots,r+m+1$. Then, we have
\[
f- ({\rm val}(\overline{P})-\epsilon) =\sigma_0-\sum_{p=1}^{s+2r+1} \sigma_p \hat{G}_p-\sum_{q=1}^{r+m+1} \phi_{q} \hat{H}_q.
\]
Thus, there exists $k \in \mathbb{N}$, ${\rm val}(Q_k) \ge {\rm val}(\overline{P})-\epsilon={\rm val}(P)-\epsilon$. Note that, by the construction, ${\rm val}(Q_k) \le {\rm val}(\overline{P})={\rm val}(P)$ for all $k \in \mathbb{N}$. Therefore, ${\rm val}(Q_k) \rightarrow {\rm val}(\overline{P})={\rm val}(P)$.
\end{proof}

\begin{remark}{\bf (Convergence to a global minimizer)}
 It is worth noting that, in addition to the assumptions of Theorem 3.5, if we further assume that the equivalent problem
$(\overline{P})$ has a unique solution say $(\bar x, \bar y)$, then we can also find the global minimizer $(\bar x,\bar y)$ with the help of the above sequential SDP relaxation problems.
In fact, as each $(Q_k)$ is a semidefinite programming problem,
its corresponding dual problem (see \cite{Lasserre_book}) can be formulated as
\begin{eqnarray*}
(Q_{k}^*) &\displaystyle \inf_{\mathbf{z} \in \mathbb{N}^{n+m+r}_{2k}}& L_{%
\mathbf{z}}(f) \\
& \mbox{subject to } & \mathbf{M}_{k}(\mathbf{z}) \succeq 0, \ \mathbf{z}_{\bf 0}=1,  \\
& & \mathbf{M}_{k- \overline{u_p}}(\widehat{G}_p,\mathbf{z}) \succeq 0, p=1,\ldots,s+2r+1
\\
& & \mathbf{M}_{k-\overline{v_q}}(\widehat{H}_q, \mathbf{z}) = 0, q=1,\ldots,r+m+1,
\end{eqnarray*} 
where $\overline{u_p}$ (resp. $\overline{v_q}$) is the largest integer which is smaller than $\frac{u_p}{2}$ (resp. $\frac{v_q}{2}$), $L_{\mathbf{z}}$ is the Riesz functional defined by $L_{\mathbf{z}%
}(f)=\sum_{\alpha} f_{\alpha} z_{\alpha}$ with $f(x)=\sum_{\alpha}
f_{\alpha}x^{\alpha}$ and, for a polynomial $f$, $\mathbf{M}_t (f, z)$, $t
\in \mathbb{N}$ is the so-called localization matrix defined by
$\left[\mathbf{M}_t (f, \mathbf{z}) \right]_{\alpha, \beta}=\sum_{\gamma}
f_{\gamma} z_{\alpha+\beta+\gamma}$ for all $\alpha,\beta \in \mathbb{N}^{n+m+r}_t$.
From the weak duality, one has ${\rm val}(\overline{P}) \ge {\rm val}(Q_k^*) \ge {\rm val}(Q_k)$. Thus, the preceding theorem together with
${\rm val}(P)={\rm val}(\overline{P})$ implies that ${\rm val}(Q_k^*) \rightarrow {\rm val}(P)$. Moreover, it was shown in \cite[Theorem 4.2]{Lasserre2000} that if the feasible set of the polynomial optimization problem $(\bar P)$ has a non-empty interior, then there exists a natural number $N_0$ such that ${\rm val}(Q_k^*)= {\rm val}(Q_k)$ for all $k \ge N_0.$

Let $\mathbf{z}_{k}$ be a %
solution of $(Q_{k}^{\ast })$. Then, as $%
k\rightarrow \infty$, we have $%
(L_{\mathbf{z}_k}(X_1),\ldots,L_{\mathbf{z}_k}(X_n)) \rightarrow \bar x$,  and $%
(L_{\mathbf{z}_k}(X_{n+1}),\ldots,L_{\mathbf{z}_k}(X_{n+m})) \rightarrow \bar y$, where $X_i$ denotes the polynomial which maps each vector to its $i$th coordinate,  $i=1,\ldots,n+m$. The conclusion follows from \cite{Sch}.
\end{remark}


The above theorem shows that one can use a sequence of semidefinite
programming problems to approximate the global optimal value of a bilevel polynomial
optimization problem with convex lower level problem. Moreover, under a sufficient rank condition (see \cite[Theorem 5.5]{Lasserre_book}), one can check whether finite
convergence has occurred, i.e., by testing whether ${\rm val}(Q_{k_0})={\rm val}(\overline{P})$ for some $k_0 \in \mathbb{N}$. This rank
condition has been implemented in the software GloptiPoly 3 \cite{Gloptpoly} along with a linear algebra procedure to extract global minimizers of a polynomial optimization problem.

We now
provide a simple example to illustrate how to use sequential SDP relaxations to solve the
bilevel polynomial optimization problems with convex lower level problem:
\begin{example}{\bf (Solution by sequential SDP relaxations)}
Consider the following simple bilevel polynomial optimization problem
\begin{eqnarray*}
& \displaystyle \min_{(x,y)\in \mathbb{R}^2} & xy^5-y^6 \\
& & x^2+y^2 \le 2 \\
& & y \in Y(x):={\rm argmin}_{w \in \mathbb{R}}\{xw: -1 \le w \le 1\}.
\end{eqnarray*}
Direct verification shows that there are two global solutions $(-1,1)$ and $(1,-1)$ with global optimal value $2$.
We note that the lower level problem is convex and it is equivalent to the following single level polynomial optimization problem
\begin{eqnarray*}
& \displaystyle \min_{(x,y,\lambda_0,\lambda_1,\lambda_2)\in \mathbb{R}^5} & xy^5-y^6 \\
& & x^2+y^2 \le 2 \\
& & \lambda_0 x+\lambda_1-\lambda_2 =0 \\
& & \lambda_i \ge 0, \lambda_1(y-1)=0, \lambda_2(-1-y)=0, -1 \le y\le 1\\
& & \lambda_0^2+\lambda_1^2+\lambda_2^2 =1.
\end{eqnarray*}
Solving the converted single level polynomial optimization problem using GloptiPoly 3, 
the solver extracted two global solutions $(x,y,\lambda_0,\lambda_1,\lambda_2)=(-1.000,1.000,0.7071,0.7071,0)$ and $(x,y,\lambda_0,\lambda_1,\lambda_2)=(1.000,-1.000,0.7071,0,0.7071)$ with the true global optimal value $-2$. 
\end{example}

\begin{remark}{\bf (Single level polynomial problem)}
In the case where $(P)$ is a single level problem, Theorem \ref{th:90} yields the known convergence result of the sequential SDP relaxation scheme (often referred to as the Lasserre hierarchy) for solving single level polynomial optimization problems \cite{Lasserre_book}.
 Indeed, consider a (single level) polynomial optimization problem $$(P_0) \ \ \ \min_{x \in \mathbb{R}^n}\{f(x): g_i(x) \le 0, i=1,\ldots,s\}.$$ Suppose that there exist $R>0$ and sums of squares polynomial $\sigma_i \in \Sigma^2[x]$ such that $$R-\Vert x\Vert ^{2}=\sigma_{0}(x)-\sum_{i=1}^{s}\sigma_{i}(x)g_i(x).$$
Let $\hat{f}(x,y)=f(x)$, $\hat{g}_i(x,y)=g_i(x)$, $i=1,\ldots,s$ and $G(x,y)\equiv 0$ for all $(x,y) \in \mathbb{R}^n \times \mathbb{R}$.
We note that ${\rm val}(P_0)$ equals the optimal value of the following bilevel polynomial optimization problem
\begin{eqnarray*}
 &\displaystyle\min_{x\in \mathbb{R}^{n},y\in \mathbb{R}^{m}}&\hat{f}(x,y) \\
&\mbox{subject to }& \hat{g}_i(x,y)\leq 0,\ i=1,\ldots ,s, \\
&&y\in Y(x):=\displaystyle\mathrm{argmin}_{w\in \mathbb{R}%
^{m}}\{0:w^2 \le 1\}
\end{eqnarray*}
Then, Theorem \ref{th:90} yields that
${\rm val}(P_0)=\displaystyle \lim_{k \rightarrow \infty}{\rm val}(Q_{k}^0)$, where, for each $k$, the problems $(Q_k^0)$ is given by
\begin{eqnarray*}
(Q_k^0) &\max_{\mu, \sigma_p} & \mu \notag \\
&\mbox{ s.t. } & f- \mu =\sigma_0-\sum_{p=1}^{s} \sigma_p g_p, \\
& & \sigma_p \in \Sigma^2[x], \ p=0,1,\ldots,s, \, \mathrm{deg} \sigma_0 \le 2k, \ \mathrm{deg} (\sigma_p g_p)
\le 2k, p=1,\ldots,s.  \notag
\end{eqnarray*}
\end{remark}
\setcounter{equation}{0}

\section{Nonconvex Lower Level Problems}

In this section, we examine how to solve a bilevel polynomial optimization problem with a nonconvex lower level problem
towards a global minimizer using semi-definite programming hierarchies.
Consider an $\epsilon$-approximation of the general bilevel
polynomial problem $(P)$:
\begin{eqnarray*}
(P_{\epsilon}) &\displaystyle \min_{(x,y) \in \mathbb{R}^n \times \mathbb{R}%
^m}& f(x,y) \\
& \mbox{subject to } & g_i(x,y) \le 0, \ i=1,\ldots,s, \\
& & h_j(y) \le 0, \ j=1,\ldots,r, \\
& & G(x,y)-\min_{w \in \mathbb{R}^m}\{G(x,w): h_j(w) \le 0,j=1,\ldots,r\}
\le \epsilon.
\end{eqnarray*}
The above $\epsilon$-approximation problem plays a key role in the so-called
value function approach for finding a stationary point of a bilevel
programming problems, and has been studied and used widely in the literature
(for example see \cite{Ye,Ye1}). The main idea of the value function approach is
to further approximate the (possibly nonsmooth and nonconvex) function $x
\mapsto \min_{w \in \mathbb{R}^m}\{G(x,w): h_j(w) \le 0,j=1,\ldots,r\}$
using smooth functions, and asymptotically solve the problem by using smooth
local optimization techniques (such as projected gradient method (PG) and
sequential quadratic programming problem (SQP) techniques). For instance,
\cite{Ye} use this approach together with the smoothing projected gradient
method to solve the bilevel optimization problem, in the case where $g_i$ depends on $x$ only, $\{x \in \mathbb{R}^n :g_i(x)\le 0\}$ and $%
\{w \in \mathbb{R}^m:h_j(w) \le 0\}$ are convex sets. The algorithm only converges to a stationary
point of the original problem (in a suitable sense).

{\em We now introduce a
general purpose scheme which enables us to solve $(P_{\epsilon})$ towards
global solutions using SDP hierarchies. The proof techniques for the convergence of this scheme (Theorem 4.6) relies on
the joint-marginal method introduced in \cite{Lasserre2010} to approximate a global solution of a parameterized single level polynomial optimization problem. Here, following the approach in \cite{Lasserre2010}, we extend the scheme and its convergence analysis to the bilevel polynomial optimization setting.}

The following known simple lemma shows that the problem $%
(P_{\epsilon})$ indeed approximates the original bilevel polynomial
optimization problem as $\epsilon \rightarrow 0_+$.  To do this, for $\epsilon,\delta \ge 0$,  recall that  $(\bar x,\bar y)$ is called a $\delta$-global solution of $(P_{\epsilon})$ if $(\bar x,\bar y)$ is feasible
for $(P_{\epsilon})$ and $f(\bar x,\bar y) \le {\rm val}(P_{\epsilon})+\delta$ where ${\rm val}(P_{\epsilon})$ is the optimal value of $(P_{\epsilon})$.
\begin{lemma}
\textbf{(Approximation lemma cf. \cite{Lemma})} \label{lemma:approximate} Suppose that $K:=\{(x,y) \in \mathbb{R}^n \times \mathbb{R}^m :g_i(x,y)\le 0\}$ and $%
F=\{w \in \mathbb{R}^m:h_j(w) \le 0\}$ are compact.
Let $\epsilon _{k}\rightarrow 0_{+}$ and $\delta
_{k}\rightarrow 0_{+}$ as $k\rightarrow \infty$. Let $(\bar{x}_k,\bar{y}_k)$ be an $\delta_{k}$-global solution for $(P_{\epsilon_k})$. Then,
$\{(\bar{x}_k,\bar{y}_k)\}_{k\in \mathbb{N}}$ is
a bounded sequence and any of its cluster point $(\bar{x},\bar{y})$ is a
solution of the bilevel polynomial optimization problem $(P)$.
\end{lemma}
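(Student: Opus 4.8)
The plan is to prove the two assertions in turn: first that the sequence is bounded (so cluster points exist), and then that every cluster point is a global solution of $(P)$, by showing it is both feasible and optimal. First I would record that each $(\bar{x}_k,\bar{y}_k)$, being feasible for $(P_{\epsilon_k})$, satisfies $g_i(\bar{x}_k,\bar{y}_k)\le 0$ and hence lies in the compact set $K$; this immediately gives boundedness and the existence of cluster points. I then fix a cluster point $(\bar{x},\bar{y})$ and pass to a subsequence converging to it, which I relabel so that $(\bar{x}_k,\bar{y}_k)\to(\bar{x},\bar{y})$.

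For feasibility, taking limits in the continuous inequalities $g_i(\bar{x}_k,\bar{y}_k)\le 0$ and $h_j(\bar{y}_k)\le 0$ yields $g_i(\bar{x},\bar{y})\le 0$ and $h_j(\bar{y})\le 0$. The substantive point is $\bar{y}\in Y(\bar{x})$. To handle it I introduce the value function $\varphi(x):=\min_{w}\{G(x,w):h_j(w)\le 0,\,j=1,\ldots,r\}$ and observe that $\varphi$ is continuous: since the feasible set $F$ is \emph{fixed} (independent of $x$) and compact and $G$ is continuous, one has $|\varphi(x_1)-\varphi(x_2)|\le\max_{w\in F}|G(x_1,w)-G(x_2,w)|$, which tends to $0$ by uniform continuity of $G$ on compacta. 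Passing to the limit in the approximation constraint $G(\bar{x}_k,\bar{y}_k)-\varphi(\bar{x}_k)\le\epsilon_k$ gives $G(\bar{x},\bar{y})\le\varphi(\bar{x})$; since $\bar{y}\in F$ the reverse inequality $G(\bar{x},\bar{y})\ge\varphi(\bar{x})$ holds automatically, so $G(\bar{x},\bar{y})=\varphi(\bar{x})$, i.e. $\bar{y}\in Y(\bar{x})$. Hence $(\bar{x},\bar{y})$ is feasible for $(P)$.

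For optimality, I would note that every feasible point of $(P)$ is feasible for $(P_\epsilon)$ for each $\epsilon\ge 0$, because $y\in Y(x)$ forces $G(x,y)-\varphi(x)=0\le\epsilon$; consequently ${\rm val}(P_\epsilon)\le{\rm val}(P)$. Combining this with $\delta_k$-suboptimality gives $f(\bar{x}_k,\bar{y}_k)\le{\rm val}(P_{\epsilon_k})+\delta_k\le{\rm val}(P)+\delta_k$, and letting $k\to\infty$ (using continuity of $f$ and $\delta_k\to 0_+$) yields $f(\bar{x},\bar{y})\le{\rm val}(P)$. Since $(\bar{x},\bar{y})$ is feasible for $(P)$, equality holds and the cluster point is a global minimizer.

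The main obstacle is the continuity of the value function $\varphi$, which is the only step that is not a routine limiting argument. It hinges entirely on the independence of the lower-level constraints from $x$ together with the compactness of $F$; without this independence $\varphi$ can fail to be continuous, and the passage to the limit in the feasibility step would break down.
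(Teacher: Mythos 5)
Your proof is correct. The paper itself states this lemma without proof, citing Loridan--Morgan, so there is no in-paper argument to compare against; your proposal supplies the standard argument the paper implicitly relies on (boundedness from compactness of $K$, continuity of the value function $\varphi$ via the fixed compact lower-level feasible set $F$, and the sandwich $\mathrm{val}(P_{\epsilon})\le \mathrm{val}(P)$), and every step checks out. Your closing observation that continuity of $\varphi$ hinges on the $x$-independence of the lower-level constraints is exactly the point the paper emphasizes in Remark 4.9.
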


The following lemma explains the analytic property of the function $\epsilon
\mapsto \mathrm{val}(P_{\epsilon})$, and shows that $\mathrm{val}%
(P_{\epsilon})$ converges to $\mathrm{val}(P)$ in the order of $O(\epsilon^{%
\frac{1}{q}})$ as $\epsilon \rightarrow 0_+$ for some $q \in \mathbb{N}_{ >
0}:=\mathbb{N} \backslash \{0\}$. The proof relies on some important properties and
facts on semialgebraic functions/sets and we delay the proof to the Appendix B.

\begin{lemma}
\textbf{(Analytic property \& approximation quality)} \label{lemma:2}
Suppose that Assumption 2.1 holds. Let $I\subseteq \mathbb{R}%
_{+}:=[0,+\infty )$ be a finite interval. For each $\epsilon \in I$, denote
the optimal value of $(P_{\epsilon })$ by $\mathrm{val}(P_{\epsilon })$.
Then,
\begin{itemize}
\item[\textrm{(i)}] the one-dimensional function $\epsilon \mapsto \mathrm{%
val}(P_{\epsilon})$ is a nonincreasing, lower semicontinuous,
right-continuous and semialgebraic function on $I$. In particular, the function $\epsilon \mapsto \mathrm{val}(P_{\epsilon})$
is continuous over $I$ except at finitely many points.

 \item[\textrm{(ii)}] 
There exist $q \in \mathbb{N}_{ > 0}$, $\epsilon_0>0$ and $M>0$ such that for all $\epsilon \in
[0,\epsilon_0)$
\begin{equation*}
\mathrm{val}(P_{\epsilon}) \le \mathrm{val}(P) \le \mathrm{val}%
(P_{\epsilon})+ M \epsilon^{\frac{1}{q}}.
\end{equation*}
\end{itemize}
\end{lemma}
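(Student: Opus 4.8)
The plan is to route everything through the lower-level value function. Set $F:=\{w\in\mathbb{R}^m: h_j(w)\le 0,\ j=1,\dots,r\}$ and $\varphi(x):=\min_{w\in F}G(x,w)$. Because $F$ is a fixed compact set (Assumption 2.1) that does not depend on $x$, the classical marginal-function continuity result (Berge's maximum theorem) gives that $\varphi$ is continuous on $\mathbb{R}^n$; moreover $\varphi$ is semialgebraic, being the parametric minimum of the polynomial $G$ over the semialgebraic set $F$ (via the Tarski--Seidenberg projection theorem recalled in Appendix A). With this notation the feasible set of $(P_\epsilon)$ is $C(\epsilon):=\{(x,y)\in K: h_j(y)\le 0,\ G(x,y)-\varphi(x)\le\epsilon\}$ and $\mathrm{val}(P_\epsilon)=\min\{f(x,y):(x,y)\in C(\epsilon)\}$, so $\epsilon\mapsto\mathrm{val}(P_\epsilon)$ is itself semialgebraic, as the parametric minimum over $\epsilon$ of a polynomial over a semialgebraic set whose graph in $(x,y,\epsilon)$ is semialgebraic.

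For (i), monotonicity is immediate: enlarging $\epsilon$ enlarges $C(\epsilon)$, so $\mathrm{val}(P_\epsilon)$ is nonincreasing. To obtain right-continuity at $\epsilon\in I$, I would take $\epsilon_k\downarrow\epsilon$, pick minimizers $(x_k,y_k)\in C(\epsilon_k)$ (which exist since $C(\epsilon_k)\subseteq K$ is compact and $f$ is continuous), and extract a convergent subsequence $(x_k,y_k)\to(\bar x,\bar y)$. Passing to the limit in the defining inequalities and using the continuity of $g_i$, $h_j$, $G$ and---crucially---of $\varphi$, I get $(\bar x,\bar y)\in C(\epsilon)$, whence $\mathrm{val}(P_\epsilon)\le f(\bar x,\bar y)=\lim_k\mathrm{val}(P_{\epsilon_k})$; the reverse inequality holds by monotonicity, so right-continuity follows. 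Lower semicontinuity is then automatic: for a nonincreasing function the left limit dominates the value while right-continuity matches the right limit to the value, so the $\liminf$ equals the value. Finally, a one-variable semialgebraic function is piecewise continuous with only finitely many pieces (the monotonicity/cell-decomposition theorem for semialgebraic functions, Appendix A), which yields continuity off a finite set.

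For (ii), I would first observe that $(P_0)$ coincides with $(P)$: the constraint $G(x,y)-\varphi(x)\le 0$ together with $y\in F$ forces $G(x,y)=\varphi(x)$, i.e. $y\in Y(x)$, so $\mathrm{val}(P_0)=\mathrm{val}(P)$. The left inequality $\mathrm{val}(P_\epsilon)\le\mathrm{val}(P)$ is then the monotonicity from (i). For the right inequality, set $\psi(\epsilon):=\mathrm{val}(P)-\mathrm{val}(P_\epsilon)\ge 0$; this is a bounded (since $f$ is continuous on the compact set $K$) semialgebraic function with $\psi(0)=0$ that is right-continuous at $0$ by (i). I would then invoke the Puiseux expansion of a one-variable semialgebraic function near $0^+$: either $\psi$ vanishes identically on some $[0,\epsilon_0)$, or $\psi(\epsilon)=c\,\epsilon^{p/q}+o(\epsilon^{p/q})$ with integers $p,q\ge 1$ and $c>0$. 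In the latter case $\psi(\epsilon)\le M\,\epsilon^{p/q}\le M\,\epsilon^{1/q}$ for $\epsilon\in[0,\epsilon_0)$ after shrinking $\epsilon_0\le 1$ and enlarging $M$, which is precisely the claimed bound.

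The main obstacle is twofold. In (i) it is the right-continuity step, where continuity of the marginal function $\varphi$ is indispensable and rests on the independence of the $h_j$ from $x$ (the paper's standing assumption); dropping this would break the limiting argument. In (ii) it is the passage from ``$\psi$ is semialgebraic and right-continuous at $0$'' to the explicit Hölder exponent $1/q$, which is exactly the content that the Puiseux/growth lemma for semialgebraic functions (to be recorded in Appendix A and applied in Appendix B) must supply. Everything else reduces to routine limiting arguments built on the compactness of $K$ and $F$ guaranteed by Assumption 2.1.
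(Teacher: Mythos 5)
Your proposal is correct and follows essentially the same route as the paper's Appendix B proof: monotonicity of $\epsilon\mapsto C(\epsilon)$, semialgebraicity of the value function via continuity/semialgebraicity of the marginal function $J$ and the projection (Tarski--Seidenberg) argument, the one-variable monotonicity theorem for finiteness of discontinuities, and the growth-dichotomy (Puiseux) lemma applied near $0$ for part (ii). The only cosmetic difference is the order in (i) --- you establish right-continuity directly by a compactness/minimizing-sequence argument and deduce lower semicontinuity from it, whereas the paper proves lower semicontinuity first and combines it with monotonicity to get right-continuity --- and both correctly reduce (ii) to continuity of the value function on a small interval $[0,\bar\epsilon]$ before invoking the Puiseux expansion.
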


Now, we present a simple example to illustrate the above lemma. It also
implies that, in general, the function $\epsilon \mapsto \mathrm{val}%
(P_{\epsilon})$ can be a discontinuous semialgebraic function. 
\begin{example}
Consider the bilevel polynomial optimization problem
\begin{eqnarray*}
(EP) & \min_{(x,y) \in \mathbb{R}^2} & y \\
& \mbox{ s.t. } & x^2 \le 1,\\
& & y \in {\rm argmin}_{w \in \mathbb{R}}\{x^2+w^2: w^2(w^2 - 1)^2 \le 0\}.
\end{eqnarray*}
Note that $\displaystyle J(x)=\min_{w \in \mathbb{R}}\{x^2+w^2: w^2(w^2 - 1)^2 \le 0\}=x^2$.
Its $\epsilon$-approximation problem is
\begin{eqnarray*}
(EP_{\epsilon}) & \displaystyle \min_{(x,y) \in \mathbb{R}^2} & y \\
& \mbox{ s.t. } & x^2 \le 1,\\
& & y^2(y^2 - 1)^2 \le 0, \; y^2 \le \epsilon.
\end{eqnarray*}
It can be verified that
$${\rm val}(EP_{\epsilon}) =
\begin{cases}
0, & \textrm{ if } 0 \le \epsilon < 1,\\
-1, & \textrm{ if } \epsilon \ge 1.
\end{cases}$$
Therefore the function $\epsilon \mapsto {\rm val}(EP_{\epsilon})$ is nonincreasing, lower semicontinuous, right-continuous and semialgebraic on $[0, +\infty).$ Moreover, it is continuous
on $[0,\epsilon_0]$ for any $\epsilon_0<1$ and it is discontinuous at $1$.
\end{example}

\subsection*{Solving  $\protect\epsilon$-approximation problems via sequential SDP relaxations%
}

Here, we describe how to solve an $\epsilon$-approximation problem via a sequence of SDP relaxation problems.
One of the key steps is to construct a sequence of polynomials to approximate
the optimal value function of the lower level problem $x \mapsto \min_{w \in
\mathbb{R}^m}\{G(x,w): h_j(w) \le 0,j=1,\ldots,r\}$. In general, the optimal
value function of the lower level problem is merely a continuous function.
We now recall a procedure introduced in \cite{Lasserre2010} to approximate
this optimal value function by a sequence of polynomials.

Recall that $K=\{x:g_i(x,y) \le 0, i=1,\ldots,s\}$. We denote $\mathrm{Pr%
}_1 K=\{x \in \mathbb{R}^n: (x,y) \in K \mbox{ for some } y \in \mathbb{R}%
^m\}$. From Assumption 2.1, $K$ is bounded, and so, $\mathrm{Pr}_1 K$ is
also bounded. Let $\mathrm{Pr}_1 K \subseteq \Omega:=\{x \in \mathbb{R}^n: \|x\|_{\infty} \le
M\}$ for some $M>0$. Let $\theta_l(x)=x_l^2-M^2$, $l=1,\ldots,n$. Then $\Omega=\{x: \theta_l(x) \le 0, l=1,\ldots,n\}$.  Let $\varphi$ be a probability Borel measure supported
on $\Omega$ with uniform distribution on $X$. We note that all the moments of $\varphi$ over $\Omega$ denoted by   $%
\gamma=(\gamma_{\beta}),$ $\beta \in \mathbb{N}^n$, defined by
\begin{equation*}
\gamma_\beta:= \int_{\Omega}  {x}^{\beta} d\varphi ({x}), \
\beta \in \mathbb{N}^n,
\end{equation*}
can be easily computed (see \cite{Lasserre2010}).

For each $k \in \mathbb{N}$ with $k \ge k_0:=\max\{\lceil \frac{\mathrm{deg}%
f}{2}\rceil, \lceil \frac{\mathrm{deg}h_j}{2}\rceil\}$, set $\displaystyle \mathbb{N}_{2k}^{n}:=\{(\alpha_1,\ldots,\alpha_n) \in \mathbb{N}^n: \sum_{l=1}^n \alpha_l \le 2k\}$ and consider the
following optimization problem
\begin{eqnarray}  \label{eq:2.2}
&\max_{\lambda, \sigma_0,\ldots,\sigma_{r+n}} & \sum_{\beta \in \mathbb{N}_{2k}^n} \lambda_\beta
\gamma_\beta  \notag \\
&\mbox{ s.t. } & G(x,y)-\sum_{\beta \in \mathbb{N}_{2k}^n} \lambda_{\beta}
 {x}^{\beta}=\sigma_0( {x}, {y})-\sum_{j=1}^{r} \sigma_j(%
 {x}, {y}) h_j( {y})-\sum_{l=1}^n \sigma_{r+l}(x,y) \theta_l(x) \notag\\
& & \sigma_j \in \Sigma[ {x}, {y}], \ j=0,1,\ldots,r+n   \\
& & \mathrm{deg} \sigma_0 \le 2k, \ \mathrm{deg} (\sigma_jh_j) \le 2k,
j=1,\ldots,r, \, \mathrm{deg} (\sigma_{r+l}\theta_l) \le 2k, l=1,\ldots,n, \notag
\end{eqnarray}
which can be reformulated as a semidefinite programming problem \cite{Lasserre2010}.  Then, for any feasible solution $%
(\lambda,\sigma_0,\sigma_1,\ldots,\sigma_{r+n})$, the polynomial $x\mapsto
J_k(x):= \sum_{\beta \in \mathbb{N}_{2k}^n}\lambda_{\beta}x^{\beta}$ is of
degree $2k$ and it satisfies, for all  $x \in \Omega=\{x:\theta_l(x) \le 0, l=1,\ldots,n\}$ and $y \in F:=\{w: h_j(w) \le 0, j=1,\ldots,r\}$,
\begin{equation*}
G(x,y)-\sum_{\beta \in \mathbb{N}_{2k}^n} \lambda_{\beta}  {x}%
^{\beta}=\sigma_0( {x}, {y})-\sum_{j=1}^{r} \sigma_j( {x},%
 {y}) h_j( {y})-\sum_{l=1}^n \sigma_{r+l}(x,y) \theta_l(x) \ge 0.
\end{equation*}
So, for every $k \in \mathbb{N}$, $J_k(x) \le J(x):=\min_{w \in \mathbb{R}^m}\{G(x,w): h_j(w) \le
0\}$ for all  $x \in \Omega$. Indeed, the next theorem shows that $J_k$ converges to the optimal
value function $J$ on $\Omega$, in the $L_1$-norm sense.

\begin{lemma}\label{lemma:4.4}
\textrm{(\cite{Lasserre2010})} Suppose that Assumption 2.1 holds. For each $%
k\in \mathbb{N}$, let $\rho _{k}$ be the optimal value of the semidefinite
programming (\ref{eq:2.2}). Let $\epsilon _{k}\rightarrow 0$ and let $%
(\lambda ,\sigma _{0},\sigma _{1},\ldots ,\sigma _{r+n})$ be an $\epsilon _{k}$%
-solution of (\ref{eq:2.2}) in the sense that $\sum_{\beta \in \mathbb{N}%
_{2k}^{n}}\lambda _{\beta }\gamma _{\beta }\geq \rho _{k}-\epsilon _{k}$.
Define $J_{k}\in \mathbb{R}_{2k}[x]$ by
$J_{k}(x)=\sum_{\beta \in \mathbb{N}_{2k}^{n}}\lambda _{\beta }x^{\beta}$.
Then, we have $J_{k}(x)\leq J(x)$ for all $x \in \Omega$ and $$\int_{\Omega }|J_{k}(x)-J(x)|d\varphi (x)\rightarrow 0  \mbox{ as } k\rightarrow \infty .$$
%
\end{lemma}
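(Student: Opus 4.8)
The plan is to read the objective of the semidefinite program \eqref{eq:2.2} as the integral of the trial polynomial against $\varphi$. Since $\gamma_\beta=\int_\Omega x^\beta\,d\varphi$, for any feasible tuple one has $\sum_{\beta\in\mathbb{N}^n_{2k}}\lambda_\beta\gamma_\beta=\int_\Omega J_k\,d\varphi$, so \eqref{eq:2.2} maximizes $\int_\Omega J_k\,d\varphi$ over admissible $J_k$. The pointwise bound $J_k\le J$ on $\Omega$ is immediate from the feasibility constraint: for $x\in\Omega$ one has $\theta_l(x)\le 0$ and for $y\in F$ one has $h_j(y)\le 0$, so evaluating the identity $G(x,y)-J_k(x)=\sigma_0-\sum_j\sigma_j h_j-\sum_l\sigma_{r+l}\theta_l$ and using that every $\sigma$ is a sum of squares (hence nonnegative) gives $G(x,y)\ge J_k(x)$; taking the infimum over the nonempty compact set $F$ yields $J(x)\ge J_k(x)$. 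Because $J_k\le J$, the $L_1$-claim reduces to showing $\int_\Omega J_k\,d\varphi\to\int_\Omega J\,d\varphi$, since then $\int_\Omega|J-J_k|\,d\varphi=\int_\Omega(J-J_k)\,d\varphi\to 0$.

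Writing $\rho_k$ for the optimal value of \eqref{eq:2.2}, admissibility of each $J_k$ gives the easy inequality $\rho_k\le\int_\Omega J\,d\varphi$, so the crux is a matching lower bound $\liminf_k\rho_k\ge\int_\Omega J\,d\varphi$. Here I would exploit that $J$ is continuous on the compact box $\Omega$; this uses the compactness of $F$ together with the fact that the lower-level constraints $h_j$ do not depend on $x$, so that $J(x)=\min_{y\in F}G(x,y)$ is continuous by a standard maximum-theorem argument. Fix $\delta>0$. By the Stone--Weierstrass theorem choose a polynomial $p$ with $\sup_{x\in\Omega}|J(x)-p(x)|<\delta/2$, and set $p_\delta:=p-\delta/2$. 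Then $0<J(x)-p_\delta(x)<\delta$ on $\Omega$; in particular $p_\delta(x)<J(x)\le G(x,y)$ for every $x\in\Omega$ and $y\in F$, so the polynomial $(x,y)\mapsto G(x,y)-p_\delta(x)$ is strictly positive on the compact set $\{(x,y):\theta_l(x)\le 0,\ h_j(y)\le 0\}$.

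The main obstacle is to convert this strict positivity into a Putinar-type representation of bounded degree that makes $p_\delta$ feasible for \eqref{eq:2.2}. I would verify that the quadratic module generated by the constraints $-\theta_l$ and $-h_j$ is Archimedean: Assumption 2.1 supplies the representation $R_2-\|y\|^2=\bar\sigma_0-\sum_j\bar\sigma_j h_j$ controlling $y$, while the box constraints $\theta_l(x)=x_l^2-M^2$ control $x$ through $nM^2-\|x\|^2=\sum_l(-\theta_l)$, so $R-\|(x,y)\|^2$ lies in the module for $R:=nM^2+R_2$. Putinar's Positivstellensatz (Lemma \ref{putinar}) then yields sums of squares $\sigma_0,\ldots,\sigma_{r+n}$ with $G-p_\delta=\sigma_0-\sum_j\sigma_j h_j-\sum_l\sigma_{r+l}\theta_l$. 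Letting $2k_\delta$ bound the degrees appearing in this identity as well as the degree of $p_\delta$, the pair $(p_\delta,\sigma_\bullet)$ is feasible for \eqref{eq:2.2} whenever $k\ge k_\delta$, whence $\rho_k\ge\int_\Omega p_\delta\,d\varphi=\int_\Omega J\,d\varphi-\int_\Omega(J-p_\delta)\,d\varphi\ge\int_\Omega J\,d\varphi-\delta$. As $\delta>0$ is arbitrary, $\rho_k\to\int_\Omega J\,d\varphi$.

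Finally I would feed in the $\epsilon_k$-optimality: $\int_\Omega J_k\,d\varphi=\sum_\beta\lambda_\beta\gamma_\beta\ge\rho_k-\epsilon_k$, and combining this with $\int_\Omega J_k\,d\varphi\le\int_\Omega J\,d\varphi$, together with $\rho_k\to\int_\Omega J\,d\varphi$ and $\epsilon_k\to 0$, gives $\int_\Omega J_k\,d\varphi\to\int_\Omega J\,d\varphi$ by squeezing. Hence $\int_\Omega|J-J_k|\,d\varphi\to 0$, completing the argument. The two places needing care are the continuity of $J$ (which justifies the Stone--Weierstrass step) and the verification of the Archimedean condition, so that Putinar's theorem produces a certificate whose degree can be absorbed by taking $k$ large.
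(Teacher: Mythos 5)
The paper does not prove this lemma---it is imported verbatim from \cite{Lasserre2010}---so there is no internal proof to compare against. Your reconstruction is correct and is essentially the argument of that reference: the pointwise bound $J_k\le J$ read off from feasibility of the certificate, continuity of $J$ (valid here because $F$ is compact and independent of $x$) plus Stone--Weierstrass to produce a polynomial $p_\delta$ with $G-p_\delta>0$ on $\Omega\times F$, the Archimedean property of the quadratic module generated by $-h_j$ and $-\theta_l$ (which you correctly assemble from the second identity in Assumption 2.1 together with $\sum_l(-\theta_l)=nM^2-\|x\|^2$) feeding into Putinar's Positivstellensatz to make $p_\delta$ feasible for all large $k$, and the final squeeze using $\epsilon_k$-optimality and $\varphi(\Omega)=1$.
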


We now introduce a scheme to solve the $\epsilon$-approximation problem for
arbitrary $\epsilon>0$, using sequences of semidefinite programming
relaxations.

\textbf{Algorithm 4.5 (general scheme)}

Step 0: Fix $\epsilon> 0$. Set $k=1$.

Step 1: Solve the semidefinite programming problem (\ref{eq:2.2}) and obtain
the $\frac{1}{k}$-solution $(\lambda^k,\sigma_j^k)$ of (\ref{eq:2.2}).
Define $J_{k}(x)= \sum_{\beta \in \mathbb{N}_{2k}^n}\lambda_{\beta}^k
x^{\beta}$.

Step 2: Consider the following semialgebraic set
\begin{equation*}
S_k:= \{(x,y): g_i(x,y) \le 0, \ i=1,\ldots,s, h_j(y) \le 0, \ j=1,\ldots,r,
G(x,y)-J_{k}(x) \le \epsilon\}.
\end{equation*}
If $S_k=\emptyset$, then let $k=k+1$ and return to Step 1. Otherwise, go to
Step 3.

Step 3: Solve the following polynomial optimization problem
\begin{eqnarray*}
(P_{\epsilon}^k) &\displaystyle \min_{(x,y) \in \mathbb{R}^n \times \mathbb{R%
}^m}& f(x,y) \\
& \mbox{subject to } & g_i(x,y) \le 0, \ i=1,\ldots,s, \\
& & h_j(y) \le 0, \ j=1,\ldots,r, \\
& & G(x,y)-J_{k}(x) \le \epsilon.
\end{eqnarray*}

Step 4: Let $v_{\epsilon}^k=\min_{1 \le i \le k} \mathrm{val}(P_{\epsilon}^i)
$. Update $k=k+1$. Go back to Step 1.

\bigskip

Before we establish the convergence of this procedure, let us comment that the feasibility problem of the
semialgebraic set in Step 2 can be tested by a sequence of SDP relaxations via the
Positivstellensatz. This was explained in \cite{Palbo} and was implemented
in the matlab toolbox SOSTOOLS. As explained before, Step 3 can also be
accomplished by solving a sequence of SDP relaxations.

Let us show that there exists a finite number $k_0$ such that $%
S_{k_0} \neq \emptyset$, and so,  Algorithm 4.5 is well-defined.

\begin{lemma}\label{lemma:6}
Let $\epsilon>0$. Consider the problem $(P_{\epsilon})$ and Algorithm 4.5. Let $%
K=\{(x,y):g_i(x,y) \le 0, i=1,\ldots,s\}$ and $F=\{w: h_j(w) \le 0, j=1,\ldots,r\}$. Suppose
that Assumption 2.1 holds and $\mathrm{cl}\big(\mathrm{int}(K \cap (\mathbb{R}^n \times F))\big)=K \cap (\mathbb{R}^n \times F)$. Then, there exists a finite
number $k_0$ such that $S_{k_0} \neq \emptyset$ in Step 2 of Algorithm 4.5.
\end{lemma}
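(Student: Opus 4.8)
The plan is to exploit the $L_1$-convergence $\int_\Omega |J_k - J|\,d\varphi \to 0$ from Lemma~\ref{lemma:4.4} together with the density hypothesis $\mathrm{cl}\big(\mathrm{int}(K \cap (\mathbb{R}^n \times F))\big) = K \cap (\mathbb{R}^n \times F)$ to produce, for all large $k$, a single point of $S_k$. The guiding idea is that $L_1$-convergence only controls $J - J_k$ on sets of positive $\varphi$-measure, so I must first manufacture a full-dimensional (hence positive-measure) reservoir of $x$-values at which the lower-level value is nearly attained by a point obeying the upper-level constraints; the density hypothesis is exactly what supplies this reservoir.

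First I would invoke the blanket feasibility assumption on $(P)$: there is $(\bar x,\bar y)$ with $g_i(\bar x,\bar y)\le 0$, $h_j(\bar y)\le 0$ and $\bar y \in Y(\bar x)$, so that $(\bar x,\bar y)\in K \cap (\mathbb{R}^n\times F)$ and $G(\bar x,\bar y)=J(\bar x)$, whence $G(\bar x,\bar y)-J(\bar x)=0<\epsilon/2$. Since the lower-level feasible set $F$ does not depend on $x$, the value function $J$ is continuous, and so $(x,y)\mapsto G(x,y)-J(x)$ is continuous. Using the density hypothesis I would choose interior points $(x_n,y_n)\in \mathrm{int}(K\cap(\mathbb{R}^n\times F))$ with $(x_n,y_n)\to(\bar x,\bar y)$; continuity of $G-J$ then yields an interior point $(x',y')$ with $G(x',y')-J(x')<\epsilon/2$.

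Next I would set $V:=\mathrm{int}(K\cap(\mathbb{R}^n\times F))\cap\{(x,y):G(x,y)-J(x)<\epsilon/2\}$, which is open and nonempty (it contains $(x',y')$). Its projection $U:=\mathrm{Pr}_1 V$ onto the $x$-space is open, nonempty, and contained in $\Omega$ (because $\mathrm{Pr}_1 K\subseteq\Omega$); since $\varphi$ is the uniform (normalized Lebesgue) measure on the full-dimensional box $\Omega$, it follows that $\varphi(U)>0$. On the other hand, as $J-J_k\ge 0$ on $\Omega$ by Lemma~\ref{lemma:4.4}, Markov's inequality gives $\varphi\big(\{x\in\Omega:J(x)-J_k(x)>\epsilon/2\}\big)\le \tfrac{2}{\epsilon}\int_\Omega (J-J_k)\,d\varphi\to 0$, again by Lemma~\ref{lemma:4.4}. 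Hence there is a finite $k_0$ for which this exceptional set has $\varphi$-measure strictly less than $\varphi(U)$.

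Finally, for this $k_0$ I would pick $x^*\in U$ outside the exceptional set, so that $J(x^*)-J_{k_0}(x^*)\le \epsilon/2$, and lift it to some $(x^*,y^*)\in V$; the latter satisfies the upper- and lower-level feasibility constraints and $G(x^*,y^*)-J(x^*)<\epsilon/2$. Adding the two estimates yields $G(x^*,y^*)-J_{k_0}(x^*)<\epsilon$, so $(x^*,y^*)\in S_{k_0}$ and $S_{k_0}\neq\emptyset$. The main obstacle is precisely the mismatch between the pointwise inequality defining $S_k$ and the merely integral control on $J-J_k$: the density hypothesis repairs this by guaranteeing that the set of admissible $x$-values (those lifting to an upper-feasible point with $G-J$ small) is full-dimensional, so that the measure-theoretic convergence can catch one of them.
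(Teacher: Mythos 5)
Your proof is correct, and its overall architecture matches the paper's: both arguments use the density hypothesis together with the continuity of $J$ to manufacture a nonempty open (hence positive-$\varphi$-measure) set of $x$-values that lift to upper-feasible points where $G-J$ is small, and then use the $L^1$-convergence of Lemma \ref{lemma:4.4} to find one such $x$ at which $J_{k}$ is also close to $J$. The difference lies in how the $L^1$-to-pointwise step is executed. The paper passes from $L^1$-convergence to almost-everywhere convergence and then invokes Egorov's theorem to obtain a subsequence $J_{l_k}$ converging almost uniformly on $\Omega$, excising a bad set $A$ of measure less than $\varphi(D)$; it then works with the exact threshold $\epsilon$ at the chosen point, since $J_{l_k}(x_0)\to J(x_0)$ there. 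You instead split the tolerance as $\epsilon/2+\epsilon/2$ and apply Markov's inequality to the nonnegative function $J-J_k$, which bounds the measure of $\{J-J_k>\epsilon/2\}$ directly by $\tfrac{2}{\epsilon}\int_\Omega(J-J_k)\,d\varphi\to 0$. Your device is more elementary (no Egorov, no passage to a subsequence, no a.e.-convergence intermediary) and produces a single explicit $k_0$; the paper's Egorov route yields the stronger statement that $S_{l_k}\neq\emptyset$ for \emph{all} large $k$ along a subsequence, which is the form reused in the convergence proof of Theorem \ref{th:convergence}. A further minor simplification on your side: you only need the blanket feasibility assumption on $(P)$ to seed the construction, whereas the paper invokes Corollary \ref{cor:1} (existence of a global minimizer), which is more than the lemma requires.
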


\begin{proof}
Note from Corollary \ref{cor:1} that a global minimizer $(\bar x,\bar y)$ of $(P)$ exists.
In particular, the set
$D_0:=\{(x,y) \in  K \cap (\mathbb{R}^n \times F): G(x,y)-J(x)< \epsilon\}$
is an nonempty set as $(\bar x, \bar y) \in D_0$.  Noting from our assumption, we have
$\mathrm{cl}\big(\mathrm{int}(K \cap (\mathbb{R}^n \times F))\big)=K \cap (\mathbb{R}^n \times F)$.   This together with the fact that
$\{(x,y): G(x,y)-J(x)< \epsilon\}$ is an open set (as the optimal value function of the lower level problem
$J(x)$
is continuous) gives us that
\[
\tilde{D}:=\{(x,y) \in \mathrm{int}(K \cap (\mathbb{R}^n \times F)): G(x,y)-J(x)< \epsilon\}
\]
is a nonempty open set. Define  $D:={\rm Pr}_1 \tilde{D}=\{x \in \mathbb{R}^n: (x,y) \in \tilde{D} \mbox{ for some } y \in \mathbb{R}^m\}$. Then, $D$ is also
a nonempty open set.
Note from Lemma \ref{lemma:4.4} that $J_k$ converges to $J$ in $L^1(\Omega,\varphi)$-norm. Hence $J_k$ converges to $J$ almost everywhere on $\Omega$. As $\varphi (\Omega)<+\infty$,  the classical Egorov's theorem\footnote{The Egorov's theorem \cite[Theorem 2.2]{Analysis} states that: for a measure space $(\Omega,\varphi)$, let $f_k$ be a sequence of functions on $\Omega$. Suppose that $\Omega$ is of finite $\varphi$-measure and $\{f_k\}$ converges $\varphi$-almost everywhere on $\Omega$ to a limit function $f$. Then, there exists
a subsequence $l_k$ such that $f_{l_k}$ converges to $f$ almost uniformly in the sense that, for every $\epsilon > 0$, there exists a measurable subset $A$ of $\Omega$ such that $\varphi(A) < \epsilon$, and $\{f_{l_k}\}$ converges to $f$ uniformly on the relative complement $\Omega \backslash A$.} implies that there exists a subsequence
$l_k$ such that $J_{l_k}$ converges to $J$ $\varphi$-almost uniformly on $\Omega$.  So, there exists
 a Borel set $A$ with $\varphi(A)<\frac{\eta}{2}$ with $\eta:=\varphi(D) >0$ such that
\[
J_{l_k} \rightarrow J \mbox{ uniformly over }  \Omega \backslash A.
\]
We observe that
$(\Omega \backslash A)\cap D \neq \emptyset$ (Otherwise, as $D \subseteq {\rm Pr}_1 K \subseteq \Omega$, we have $D \subseteq A$. This implies that $\eta=\varphi(D) \le \varphi(A)=\eta/2$ which is impossible as $\eta>0$).
Let $x_0 \in (\Omega \backslash A)\cap D$. Then, we have $J_{l_k}(x_0) \rightarrow J(x_0)$ and there exists $y_0 \in \mathbb{R}^m$ such that
$y_0 \in F$, $G(x_0,y_0)-J(x_0)< \epsilon$. In particular, for all $k$ large, $(x_0,y_0) \in S_{l_k}$. Therefore,
$S_{l_k} \neq \emptyset$ for all large $k$, and so, the conclusion follows.
\end{proof}

\begin{remark}
The fact that  $L_1$-convergence implies the almost-uniform convergence can also be seen by using Theorem 2.5.1
($L_1$-convergence implies convergence in measure) and Theorem 2.5.3
(convergence in measure implies almost-uniform convergence for a subsequence) of \cite[Page 92-93]{Ash} without requiring
the measure of $\Omega$ to be finite.
\end{remark}

We note that the condition ``$\mathrm{cl}\big(\mathrm{int}(K \cap (\mathbb{R}^n \times F))\big)=K \cap (\mathbb{R}^n \times F)$'' holds
when $C:=K \cap (\mathbb{R}^n \times F)$ is a finite union of
closed convex sets $C_i$ with $\mathrm{int}C_i \neq \emptyset$.
Moreover, if the set $C$ is of the form $\{(x,y) \in \mathbb{R}^{n} \times \mathbb{R}^m: G_i(x,y) \le 0, i=1,\ldots,l\}$ for some
polynomials $G_i$, $i=1,\ldots,l$ and $l \in \mathbb{N}$, then the above condition also holds if the commonly used Mangasarian-Fromovitz constraint
qualification \cite{MFCQ} is satisfied for any $(x,y) \in C$.

We are now ready to state the convergence theorem of the proposed Algorithm 4.5.  The proof of it is quite technical and so it is given later in Appendix C.

\begin{theorem}\label{th:convergence}
\textbf{(General bilevel problem (P): Convergence theorem)} Let $\epsilon >0$ and consider
problem $(P_{\epsilon })$. Let $v_{\epsilon }^{k}$ be generated by Algorithm 4.5.  Let $%
K=\{(x,y):g_i(x,y) \le 0, i=1,\ldots,s\}$ and $F=\{w: h_j(w) \le 0, j=1,\ldots,r\}$. Suppose
that Assumption 2.1 holds and  $\mathrm{cl}\big(\mathrm{int}(K \cap (\mathbb{R}^n \times F))\big)=K \cap (\mathbb{R}^n \times F)$.  Then,
\begin{itemize}
\item[\textrm{(i)}] $v_{\epsilon}^k \rightarrow v_{\epsilon}$ as $k
\rightarrow \infty$ where $\mathrm{val}(P_{\epsilon}) \le v_{\epsilon} \le %
\displaystyle \lim_{\delta \rightarrow \epsilon^{-}}\mathrm{val}(P_{\delta})$%
. In particular, for almost every $\epsilon$, $v_{\epsilon}^k \rightarrow \mathrm{%
val}(P_{\epsilon})$ in the sense that, for all finite intervals $I \subseteq
\mathbb{R}_+$, $v_{\epsilon}=\mathrm{val}(P_{\epsilon})$ for all $\epsilon
\in I$ except at finitely many points.

\item[\textrm{(ii)}] There exists $\epsilon_0>0$ such that, for all $\epsilon \in (0,\epsilon_0)$, $v_{\epsilon}^k
\rightarrow \mathrm{val}(P_{\epsilon})$ as $k \rightarrow \infty$%
. Moreover, let $\delta_k \downarrow 0$. Let $v_{\epsilon}^k=\min_{1 \le i
\le k} \mathrm{val}(P_{\epsilon}^i)=\mathrm{val}(P_{\epsilon}^{i_k})$ and
let $(x_k,y_k)$ be a $\delta_k$-solution of $(P_{\epsilon}^{i_k})$. Then, $%
\{(x_k,y_k)\}$ is a bounded sequence and any cluster point $(\widehat{x},%
\widehat{y})$ of $(x_k,y_k)$ is a global minimizer of $(P_{\epsilon})$ for
all $\epsilon \in (0,\epsilon_0)$.
\end{itemize}
\end{theorem}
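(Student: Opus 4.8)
The plan is to prove the two-sided estimate in (i) first, deduce the remaining assertions of (i) and the first part of (ii) by a squeezing argument that invokes the analytic properties of $\epsilon\mapsto\mathrm{val}(P_\epsilon)$ from Lemma \ref{lemma:2}, and finally read off the convergence of minimizers from compactness.

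\textbf{Monotone lower bound.} Since $J_k(x)\le J(x)$ on $\Omega$ by Lemma \ref{lemma:4.4}, the constraint $G(x,y)-J_k(x)\le\epsilon$ is tighter than $G(x,y)-J(x)\le\epsilon$; hence the feasible set of each $(P_\epsilon^k)$ is contained in that of $(P_\epsilon)$, so $\mathrm{val}(P_\epsilon^k)\ge\mathrm{val}(P_\epsilon)$. Consequently $v_\epsilon^k=\min_{1\le i\le k}\mathrm{val}(P_\epsilon^i)$ is nonincreasing in $k$ and bounded below by $\mathrm{val}(P_\epsilon)$, so it converges to some $v_\epsilon\ge\mathrm{val}(P_\epsilon)$ (the minimum is well defined for large $k$ because $S_{k_0}\neq\emptyset$ by Lemma \ref{lemma:6}).

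\textbf{Upper bound (the main step).} Here I would show $v_\epsilon\le\mathrm{val}(P_\delta)$ for every $\delta\in(0,\epsilon)$, which yields $v_\epsilon\le\lim_{\delta\to\epsilon^-}\mathrm{val}(P_\delta)$. Fix $\eta>0$ and a feasible point $(x^*,y^*)$ of $(P_\delta)$ with $f(x^*,y^*)<\mathrm{val}(P_\delta)+\eta$. Using the hypothesis $\mathrm{cl}(\mathrm{int}\,C)=C$ with $C:=K\cap(\mathbb{R}^n\times F)$, together with continuity of $f$, of $G$ and of the value function $J$, I can select an interior point $(x',y')\in\mathrm{int}\,C$ near $(x^*,y^*)$ with $f(x',y')<\mathrm{val}(P_\delta)+\eta$ and $G(x',y')-J(x')<\delta''$ for a fixed $\delta''\in(\delta,\epsilon)$. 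This makes
\[
\widetilde{D}:=\{(x,y)\in\mathrm{int}\,C : f(x,y)<\mathrm{val}(P_\delta)+\eta,\ G(x,y)-J(x)<\delta''\}
\]
a nonempty open set, so its projection $D:=\mathrm{Pr}_1\widetilde{D}$ is a nonempty open subset of $\Omega$ with $\varphi(D)>0$. Exactly as in the proof of Lemma \ref{lemma:6}, the $L_1$-convergence of $J_k$ (Lemma \ref{lemma:4.4}) and Egorov's theorem furnish a subsequence with $J_{l_k}\to J$ uniformly on $\Omega\setminus A$ where $\varphi(A)<\varphi(D)$, so $(\Omega\setminus A)\cap D\neq\emptyset$; picking $x_0$ there and $y_0$ with $(x_0,y_0)\in\widetilde{D}$, uniform convergence gives $J(x_0)-J_{l_k}(x_0)<\epsilon-\delta''$ for large $k$, whence $G(x_0,y_0)-J_{l_k}(x_0)<\epsilon$ and $(x_0,y_0)$ is feasible for $(P_\epsilon^{l_k})$. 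Therefore $v_\epsilon\le\mathrm{val}(P_\epsilon^{l_k})\le f(x_0,y_0)<\mathrm{val}(P_\delta)+\eta$, and letting $\eta\downarrow 0$ then $\delta\uparrow\epsilon$ proves the bound. This density-plus-Egorov construction, in particular producing a near-optimal \emph{interior} point so that the objective value and the value-function gap are controlled simultaneously, is the crux and the main obstacle of the argument.

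\textbf{Squeezing and minimizers.} Combining the two bounds gives $\mathrm{val}(P_\epsilon)\le v_\epsilon\le\lim_{\delta\to\epsilon^-}\mathrm{val}(P_\delta)$. By Lemma \ref{lemma:2}(i) the map $\epsilon\mapsto\mathrm{val}(P_\epsilon)$ is semialgebraic and continuous on any finite interval except at finitely many points; at each continuity point the outer bounds coincide, so $v_\epsilon=\mathrm{val}(P_\epsilon)$ for all but finitely many $\epsilon$, which is (i). Moreover the discontinuity points in $(0,1]$ are finite in number and hence admit a positive lower bound $\epsilon_0$, so $\mathrm{val}(P_\cdot)$ is continuous on $(0,\epsilon_0)$ and $v_\epsilon^k\to\mathrm{val}(P_\epsilon)$ there, giving the first part of (ii). For the second part, feasibility forces $(x_k,y_k)\in C$, compact by Assumption 2.1, so $\{(x_k,y_k)\}$ is bounded; for a cluster point $(\widehat x,\widehat y)$, passing to the limit in $G(x_k,y_k)-J(x_k)\le G(x_k,y_k)-J_{i_k}(x_k)\le\epsilon$ (using $J_{i_k}\le J$ and continuity of $G,J$) shows $(\widehat x,\widehat y)$ is feasible for $(P_\epsilon)$, while $f(\widehat x,\widehat y)=\lim f(x_k,y_k)\le\lim(v_\epsilon^k+\delta_k)=v_\epsilon=\mathrm{val}(P_\epsilon)$ shows it is a global minimizer.
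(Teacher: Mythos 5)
Your proposal is correct and follows essentially the same route as the paper's proof in Appendix C: the lower bound from $J_k\le J$, the upper bound via the closure-of-interior hypothesis plus Egorov's theorem applied to the $L^1$-convergent underestimators $J_k$, the squeeze through the semialgebraic continuity properties of $\epsilon\mapsto\mathrm{val}(P_\epsilon)$ from Lemma \ref{lemma:2}, and the compactness argument for cluster points. The only differences are cosmetic (you approximate with an $\eta$-near-optimal feasible point of $(P_\delta)$ and an intermediate threshold $\delta''\in(\delta,\epsilon)$, whereas the paper uses an exact minimizer of $(P_{\epsilon-\delta})$ and the strict inequality $G(x,y)-J(x)<\epsilon$ directly).
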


We now  illustrate how our general scheme can lead to solving a bilevel
programming problem with a nonconvex lower level problem towards a global
solution. This is done by applying our scheme to a known test problem of the bilevel programming literature.
\begin{example}\label{example:4.8}{\bf (Illustration of our approximation scheme)}
Consider the following bilevel optimization test problem (for example see \cite{Ye,Barton})
\begin{eqnarray*}
& \min_{(x,y) \in \mathbb{R}^2 } & x+y \\
& \mbox{subject to} & x \in [-1,1], y \in {\rm argmin}_{w \in [-1,1]}\{\frac{xw^2}{2}-\frac{w^3}{3}\}.
\end{eqnarray*}
Let $Y(x):={\rm argmin}_{w \in [-1,1]}\{\frac{xw^2}{2}-\frac{w^3}{3}\}$.  Clearly, the lower level problem is nonconvex and all the conditions in Theorem \ref{th:convergence} are
satisfied. The optimal value function of the lower level problem is given by
\[
J(x)= \min_{w \in [-1,1]}\{\frac{xw^2}{2}-\frac{w^3}{3}\}=\left\{\begin{array}{ccl}
                                                                 0, & \mbox{ if } & x \in [\frac{2}{3},1], \\
                                                                 \frac{x}{2}-\frac{1}{3}, & \mbox{ if } & x \in [-1,\frac{2}{3}),
                                                                 \end{array}
 \right.
\]
and the solution set of the lower level problem $Y(x)$ can be formulated as
\[
Y(x)= \left\{\begin{array}{ccl}
                                                                 \{0\}, & \mbox{ if } & x \in (\frac{2}{3},1], \\
                                                                 \{0,1\}, & \mbox{ if } & x=\frac{2}{3}, \\
                                                                 \{1\}, & \mbox{ if } & x \in [-1,\frac{2}{3}).
                                                                 \end{array}
 \right.
 \]
It is easy to check that the true (unique) global minimizer is $(\bar x,\bar y)=(-1,1)^T$ and the true global optimal value is $0$.

Now, for $k=3$, using GloptiPoly 3, we obtain a degree $2k(=6)$ polynomial approximation of $J(x)$ which is
\[
J_3(x)= -0.3338+ 0.5011*x+0.0098*x^2-0.0032*x^3-0.0696*x^4-0.1012*x^5-0.0432*x^6.
\]
The following figure depicts the graph of the functions $J_3$ and $J$, where the red curve is the graph of the function $J$ and the blue curve is the graph of the degree
$6$ polynomial
$J_3$. From the graph, we can see that $J_3 \le J$ over the interval $[-1,1]$ and provides
a reasonably good approximation of the piecewise differentiable (and so, non-polynomial) function $J(x)$.
\begin{center}
{\bf Figure 1: $J(x)$ and its degree-6 underestimation in Example 4.8}

\includegraphics[scale=0.50]{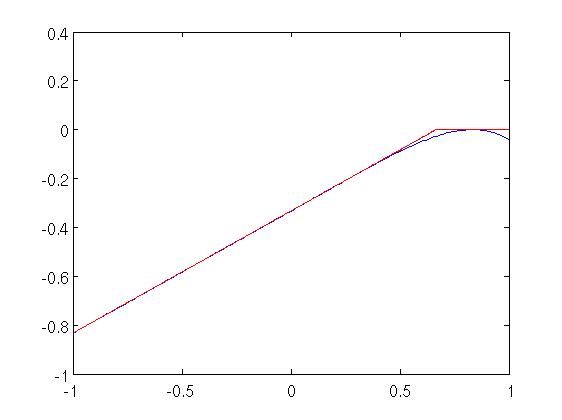}
	\end{center}

Setting $\epsilon=0.001$ and solving the following polynomial optimization problem
\begin{eqnarray*}
& \min_{(x,y) \in \mathbb{R}^2 } & x+y \\
& \mbox{subject to} & x \in [-1,1], \\
& & y \in [-1,1], \\
& & \frac{xy^2}{2}-\frac{y^3}{3}-J_3(x) \le 0.001,
\end{eqnarray*}
with GloptiPoly 3,
the solver returns the point $(x,y)=(-1.0000,0.9996)$ with its associated function value $-4.1680e-04$, which is a reasonably good
approximation of the true global minimizer and global optimal value of the bilevel programming problem.

\end{example}

\begin{remark}{\bf (Further extensions of the approach)} \label{remark:3} Although we presented our approach for a class of bilevel problems where the
constraints of the lower-level problem are independent of the upper-level
decision variable $x$, our approach may be extended to solve the following more
general bilevel polynomial optimization problem:
\begin{eqnarray*}
(GP) &\displaystyle\min_{x\in \mathbb{R}^{n},y\in \mathbb{R}^{m}}&f(x,y) \\
&\mbox{subject to }&g_i(x,y)\leq 0,\ i=1,\ldots ,s, \\
&&y\in Y(x):=\displaystyle\mathrm{argmin}_{w\in \mathbb{R}%
^{m}}\{G(x,w):h_{j}(x,w)\leq 0,j=1,\ldots ,r\},
\end{eqnarray*}%
where the constraints of the lower level problem are allowed to depend on $x$. In this case, we can construct a sequence of semidefinite programming relaxation for
finding a global minimizer and a global minimum of its $\epsilon $%
-approximation problem and  similar convergence results of the scheme can be
achieved under an additional technical assumption that the optimal value
function of the lower level problem $J(x):=\min_{w\in \mathbb{R}%
^{m}}\{G(x,w):h_{j}(x,w)\leq 0,j=1,\ldots ,r\}$ is continuous. However, we
wish to note that, for the problem (P) discussed in this paper (that
is, $h_{j}$ are independent of $x$), this condition is automatically
satisfied. On the other hand, in general, this condition may fail for the
general problem (GP) even when $n=m=1$. We provide a simple example to
illustrate this.
Consider the following bilevel programming problem
\begin{eqnarray*}
 &\displaystyle \min_{x \in \mathbb{R}, y \in \mathbb{R}}& x^2+y^2 \\
& \mbox{subject to } & 0 \le x \le 1, \\
& & y \in Y(x):=\displaystyle {\rm argmin}_{w \in \mathbb{R}}\{(x-w)^2: x^2-w^2 \le 0, w(w-1) \le 0, -w(w-1) \le 0\}.
\end{eqnarray*}
It can be directly verified that the optimal value function of the lower level problem is given by $$J(x):=\min_{w \in \mathbb{R}}\{(x-w)^2: x^2-w^2 \le 0, w(w-1) = 0\}=\left\{\begin{array}{ccc}
 0,    & \mbox{ if } & x =0, \\
(x-1)^2,   &  \mbox{ if } & x \in (0,1].                                                                                                                                                                 \end{array}
 \right.$$
and is discontinuous at $x=0$.
\end{remark}

\section{Numerical Examples}

In this Section, we apply our schemes to solve some
bilevel optimization test problems available in the literature and present their results. We conducted the numerical tests on a computer with a 2.8 GHz Intel Core i7  and 8 GB RAM, equipped with Matlab 7.14 (R2012a). We solved bilevel polynomial problems with convex as well as non-convex lower-level problems, where the lower level problems
are independent of the upper level decision variables.

We first present results for the following bilevel problems with a convex lower level problem. We note that all the assumptions of Theorem \ref{th:90} are satisfied by these bilevel problems with a convex lower level problem.

\begin{example}\label{ex:5.1}
Consider the following bilevel polynomial problem \cite{test}
\[
\left\{\begin{array}{ll}
\min_{x,y \in \mathbb{R}} & (x-3)^2+(y-2)^2 \\
\mbox{s.t.} & -2x+y-1 \le 0 \\
& x-2y+2 \le 0 \\
& x+2y-14 \le 0 \\
& 0 \le x \le 8 \\
& 0 \le y \le 10 \\
&  y \in \displaystyle\mathrm{argmin}_{w\in \mathbb{R}%
}\{(w-5)^2: w \in [0,10]\}.
                     \end{array}
 \right.
\]
This problem has a unique global minimizer
$(x^*,y^*)=(3,5)$ and the optimal value $f^*=9$.
\end{example}

\begin{example}\label{ex:5.2}
Consider the following bilevel polynomial problem \cite{test}
\[
\left\{\begin{array}{ll}
\min_{x,y \in \mathbb{R}} & -(4x-3)y+(2x+1)\\
\mbox{s.t.} &  0 \le x \le 1 \\
& 0 \le y \le 1 \\
&  y \in \displaystyle\mathrm{argmin}_{w\in \mathbb{R}%
}\{ -(1-4x)w-(2x+2): w \in [0,1]\}.
                     \end{array}
 \right.
\]
This problem has a unique global minimizer
$(x^*,y^*)= (0.25,0)$ and the optimal value $f^*=1.5$.
\end{example}
We first transformed the problems in Example \ref{ex:5.1} and Example \ref{ex:5.2}
into equivalent single-level nonconvex polynomial optimization problems as proposed in Section 3. Then, we used GloptiPoly 3 \cite{Gloptpoly} and the SDP solver Sedumi \cite{Sedumi}
to solve the transformed polynomial optimization problems.
For these two problems, the second relaxation problem (that is, problem $(Q_2)$) of the SDP approximation scheme (3.12) returns a solution which agrees with the true solution.

The following table summarizes the results of bilevel problems with a convex lower level problem where $(x^*,y^*)$ and $f^*$
denote the true global minimizer and the true optimal value respectively,  $(x,y)$ and $f$ denote the computed minimizer and the computed optimal value respectively and
CPU time represents the CPU time (in seconds) used to solve the problems.
\begin{center}
{{\bf{Table 1: Convex Lower-Level Problems}}}

{\small \begin{tabular}{||c|c|c||} \hline
Test Problems   & Known optimal solutions & Computed solutions \\ \hline
Example \ref{ex:5.1}  &   $(x^*,y^*)=(3,5)$   & $(x,y)=(3.0000,5.0000)$  \\
& $f^*=9$ &  $f=9.0000$ \\
& & CPU time=0.2511\\
\hline
  Example \ref{ex:5.2}   & $(x^*,y^*)= (0.25,0)$   & $(x,y)= (0.2500,0.0000)$  \\
& $f^*=1.5$ &  $f=1.5000$  \\
& & CPU time=0.1957
\\ \hline
\end{tabular} }

\end{center}

We now solve the following bilevel problems with a non-convex lower level problem. Again, all the assumptions in Theorem \ref{th:convergence} are satisfied by these bilevel problems with a nonconvex lower level problem.
\begin{example}\label{ex:5.4}
Consider the following bilevel polynomial problem \cite{test1}
\[
\left\{\begin{array}{ll}
\min_{x,y \in \mathbb{R}} & x \\
\mbox{s.t.} &  -x+y \le 0 \\
& -10 \le x \le 10 \\
& -1 \le y \le 1 \\
&  y \in \displaystyle\mathrm{argmin}_{w\in \mathbb{R}%
}\{w^3: w \in [-1,1]\}.
                     \end{array}
 \right.
\]
This problem has a unique global minimizer
$(x^*,y^*)=(-1,-1)$ with the optimal value $f^*=-1$.
\end{example}

\begin{example}\label{ex:5.5}
Consider the following bilevel polynomial problem \cite{test1}
\[
\left\{\begin{array}{ll}
\min_{x,y \in \mathbb{R}} & 2x+y \\
\mbox{s.t.} &  -1 \le x \le 1 \\
& -1 \le y \le 1 \\
&  y \in \displaystyle\mathrm{argmin}_{w\in \mathbb{R}%
}\{-\frac{1}{2}xw^2-\frac{1}{4}w^4: w \in [-1,1]\}.
                     \end{array}
 \right.
\]
This problem has  two global minimizers
$(x_1^*,y_1^*)= (-1,0)$ and $(x_2^*,y_2^*)= (-1/2,-1)$ with the optimal value $f^*=-2$.
\end{example}
%
%
%

\begin{example}\label{ex:5.6}
Consider the following bilevel polynomial problem \cite{test1}
\[
\left\{\begin{array}{ll}
\min_{x,y \in \mathbb{R}} & y \\
\mbox{s.t.} &  0.1 \le x \le 1 \\
& -1 \le y \le 1 \\
&  y \in \displaystyle\mathrm{argmin}_{w\in \mathbb{R}%
}\{x(16 w^4 + 2 w^3 + 8 w^2 +\frac{3}{2} w + \frac{1}{2}): w \in [-1,1]\}.
                     \end{array}
 \right.
\]
This problem has infinitely many global minimizers
$(x^*,y^*)= (a,0.5)$ for any $a \in [0.1,1]$ with the optimal value $f^*=0.5$.
%
\end{example}
%
%
\begin{example}\label{ex:5.7}
Consider the following bilevel polynomial problem \cite{test1}
\[
\left\{\begin{array}{ll}
\min_{x,y \in \mathbb{R}} & -x + xy + 10y^2 \\
\mbox{s.t.} &  -1 \le x \le 1 \\
& -1 \le y \le 1 \\
&  y \in \displaystyle\mathrm{argmin}_{w\in \mathbb{R}%
}\{-xw^2+w^4/2: w \in [-1,1]\}.
                     \end{array}
 \right.
\]
This problem has  a unique global minimizer
$(x^*,y^*)= (0,0)$  with the optimal value $f^*=0$.
\end{example}

 We solved these four problems by using the approximation scheme proposed
 in Section 4 implemented via the software GloptiPoly 3 and the SDP solver Sedumi. For detailed illustration of how the scheme is implemented, see Example
\ref{example:4.8}.  
The numerical results are summarized in the following table. Note that $\textit{{\rm deg}}$ denotes the maximum degree of the polynomial underestimation used in a subproblem of our scheme.
%

\begin{center}
{\bf{Table 2: Non-Convex Lower-Level Problems}}

{\small \begin{tabular}{||c|c|c||} \hline
Test Problems  & Known optimal solutions & Computed solutions \\ \hline
   Example \ref{ex:5.4}   & $(x^*,y^*)= (-1,-1)$ & $(x,y)= (-1.0000,-1.0000)$  \\
& $f^*=-1$ &  $f=-1.0000$ \\
& & {\rm CPU time}=1.0746 \\
& &   {\rm deg}=12  \\
\hline
   Example \ref{ex:5.5}  & $(x^*,y^*)= (-1,0)$ or $(-1/2,-1)$  & $(x,y)= (-0.9991,-0.0020)$  \\
& $f^*=-2$ &  $f=-2.0002$ \\
& & {\rm CPU time}=5.1432 \\
& &   {\rm deg}=14  \\
\hline
   Example \ref{ex:5.6}   &  $(x^*,y^*)= (a,0.5)$ for all $a \in [0.1,1]$ & $(x,y)= (0.2299,0.4990)$ \\
& $f^*=0.5$ &  $f=0.4990$ \\
& & {\rm CPU time}=6.8819 \\
& &   {\rm deg}=12  \\
\hline
    Example \ref{ex:5.7}   & $(x^*,y^*)= (0,0)$ & $(x,y)= (0.0034,-0.0002)$ \\
& $f^*=0$ &  $f=-0.0034$  \\
& & {\rm CPU time}= 0.8844 \\
& &   {\rm deg}=10  \\
\hline
\end{tabular} }
\end{center}
\section{Conclusion and Further Research}

We established that a global minimizer and the global minimum of a
bilevel polynomial optimization problem can be found by way of solving a sequence of
semidefinite programming relaxations.  We first considered a bilevel polynomial
optimization problem where the lower level problem is a convex problem. In this case, we
proved that the values of the sequence of relaxation problems converge to the global optimal value of the bilevel problem under a mild assumption. This shows that a global solution can simply be found by first
transforming the bilevel problem  into an equivalent single-level polynomial
problem and then solving the resulting single-level problem by the standard
sequential SDP relaxations used in the polynomial optimization \cite{Lasserre_book}.

We then examined a general bilevel polynomial optimization problem with a
not necessarily convex lower-level problem. We established that the global optimal value in this case can be found by way of solving a new sequential
semidefinite programming relaxation problems  based on the joint-marginal approach proposed in \cite%
{Lasserre2010}. This was done by using a sequence of semidefinite
programming relaxations of its $\epsilon$-approximation problem  under the standard
Assumption 2.1 of polynomial optimization, where  $%
\epsilon >0$ is smaller than a positive threshold.

The convergence of the proposed semidefinite programming
approximation scheme relies on Assumption 2.1 which requires that the
feasible set of the bilevel problem is bounded. The proposed scheme can also
be extended to cover possible unbounded feasible sets by exploiting
coercivity of the objective function of the upper/lower level problem as
in our recent papers \cite{JKLL,JLL_JOTA,JPL} where the convergence of
the sequence of semidefinite programming relaxations was established for polynomial
optimization problems with unbounded feasible sets.

Our bilevel problem, in the present paper,
represents the so-called optimistic approach to the leader and follower's
game in which the follower is assumed to be co-operative and so, the leader
can choose the solution with the lowest cost. The pessimistic approach assumes that the follower may not be co-operative
and hence the leader will need to prepare for the worst cost. Mathematically, the following bilevel problem represents the pessimistic approach:
\begin{eqnarray*}
 &\displaystyle\min_{x\in \mathbb{R}^{n}}& \max_{y \in Y(x)} f(x,y) \\
&\mbox{subject to }& g_i(x)\leq 0,\ i=1,\ldots ,s,
\end{eqnarray*}
where $Y(x):=\displaystyle\mathrm{argmin}_{w\in \mathbb{R}%
^{m}}\{G(x,w):h_{j}(w)\leq 0,j=1,\ldots ,r\}$. A possible method to solving this bilevel problem is to construct a polynomial approximation for the optimal value of the problem $x \mapsto \max_{y \in Y(x)} f(x,y)$ using the joint marginal approach of \cite{Lasserre2010} and then design a semidefinite programming approximation method that is similar to the scheme studied in the present paper. This would be an interesting topic for future research.

\vspace{-0.3cm}
\section*{Appendix A: Semi-algebraic functions and sets}
In this appendix, we summarize some of the important properties of semi-algebraic functions which are used in this paper (see \cite{Bierstone1988}).
\begin{itemize}
\item[{\rm (i)}] Finite union (resp. intersection) of semi-algebraic
sets is semi-algebraic. The Cartesian product (resp. complement, closure) of semi-algebraic
sets is semi-algebraic.

\item[{\rm (ii)}] If
$f, g$
are semi-algebraic functions on $\mathbb{R}^n$
and $\lambda \in \mathbb{R}$, then $f+g$, $f g$ and $\lambda f$ are all semi-algebraic functions.

\item[{\rm (iii)}] If
$f$
is a semi-algebraic function on $\mathbb{R}^n$
and $\lambda \in \mathbb{R}$, then $\{x: f(x) \le \lambda\}$ (resp. $\{x: f(x) \le \lambda\}$, $\{x: f(x) < \lambda\}$ and $\{x: f(x) = \lambda\}$ are all
semi-algebraic sets.

\item[{\rm (iv)}] A composition of semi-algebraic maps is a semi-algebraic map.

\item[{\rm (v)}]  The  image  and  inverse image of  a  semi-algebraic set  under  a  semi-algebraic map are semi-algebraic sets. In particular, the projection of a semi-algebraic set
is still a semi-algebraic set.

\item[{\rm (vi)}]  If $S$ is a compact semi-algebraic set in $\mathbb{R}^m$ and $f: \mathbb{R}^n \times \mathbb{R}^m \rightarrow \mathbb{R}$ is a real polynomial,
then the function $x \mapsto \displaystyle \min_{y \in \mathbb{R}^m} \{f(x,y)\ : \ y \in S \},$ is also semi-algebraic.

\end{itemize}
\begin{remark}\label{remark:1}{\rm If  $A \in \mathbb{R}^n ,  B \in \mathbb{R}^m$  and $S \in \mathbb{R}^n \times \mathbb{R}^m$  are semi-algebraic sets, then we see that
$U:=\{ x \in A \ :  (x, y) \in S,\, \forall \ y \in B\}$ is also a semi-algebraic set. To see this, from property {\rm (v)}, we see that  $\{ x \in A \ : \ \exists y \in B,  (x, y) \in S \}$  is semialgebraic. As the complement of $U$
is the union of the complement of $A$  and the set $\{ x \in A \ : \ \exists y \in B,  (x, y) \not\in S \}$, it follows that the complement of $U$ is semi-algebraic by property {\rm (i)}. Thus, $U$ is also semi-algebraic by property
{\rm (i)}. In general, if we have a finite collection of semi-algebraic sets, then any set obtained from them by a finite chain of quantifiers is also semi-algebraic.
}\end{remark}

For a one-dimensional semi-algebraic function, we have further the following properties:
\begin{lemma}{\bf (Monotonicity Theorem \cite{Dries1996})} \label{lemma:5.3}
Let $f$ be a semi-algebraic function $f$ on $\mathbb{R}$.  Let $a,b \in \mathbb{R}$ with $a<b$. Then, there exists a finite
subdivision $a = t_0 < t_1 < \ldots < t_k = b$ such that, on each interval
$(t_i, t_{i+1})$, $f$ is continuous and $f$  either takes a constant value or is strictly monotone.
\end{lemma}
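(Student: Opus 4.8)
The plan is to build the required subdivision in two stages: first arrange that $f$ is continuous on each open subinterval, then refine the partition so that the sign of the derivative of $f$ is constant on each remaining piece. The elementary engine behind both stages is the basic structure of semi-algebraic subsets of $\mathbb{R}$, namely that every such set is a finite union of points and open intervals; equivalently, an infinite semi-algebraic subset of $\mathbb{R}$ contains an open interval. I will use this repeatedly, together with the closure properties of Appendix A and Remark \ref{remark:1}.

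First I would show that the one-sided limits $f(x^+):=\lim_{t\to x^+}f(t)$ and $f(x^-):=\lim_{t\to x^-}f(t)$ exist in $[-\infty,+\infty]$ at every point $x\in(a,b)$. Fix $x$ and a threshold $\lambda\in\mathbb{R}$. By the properties in Appendix A the set $\{t:f(t)>\lambda\}$ is semi-algebraic, hence a finite union of points and intervals, so it either contains a right-neighbourhood of $x$ or is disjoint from one (in the latter case $f\le\lambda$ on a right-neighbourhood). Thus, as $\lambda$ increases, the property ``$f>\lambda$ on a right-neighbourhood of $x$'' passes from true to false at a single cut-off $L\in[-\infty,+\infty]$, and one checks directly that $f(x^+)=L$; the left limit is symmetric. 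Consequently $f$ is continuous at $x$ precisely when $f(x^-)=f(x)=f(x^+)$.

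Next I would prove that the discontinuity set $E=\{x\in(a,b): f(x^-)\neq f(x)\text{ or }f(x^+)\neq f(x)\}$ is finite. Using the graph of $f$ and Remark \ref{remark:1}, the maps $x\mapsto f(x^\pm)$ are semi-algebraic and $E$ is semi-algebraic, so by the structure fact it suffices to rule out that $E$ contains an interval $(c,d)$. The subset where $f(x^+)=+\infty$ cannot contain $(c,d)$: if it did, then for every $\lambda$ the set $\{f>\lambda\}$ would contain a right-neighbourhood of each point of $(c,d)$, hence, being a finite union of intervals, all but finitely many points of $(c,d)$, which is absurd for $\lambda=f(x_0)+1$ at a fixed $x_0\in(c,d)$. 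The same reasoning excludes $f(x^\pm)=\pm\infty$, so the one-sided limits are finite off a finite set, and an analogous interval-free argument applied to the semi-algebraic sets on which $f(x^+)$, $f(x^-)$ and $f(x)$ are pairwise ordered then shows $E$ contains no interval. I expect this continuity-finiteness step to be the main obstacle; the cleanest fully rigorous route is the cell (cylindrical algebraic) decomposition of the graph $\Gamma\subseteq\mathbb{R}^2$, over which the fibrewise ``single point'' condition forces $f$ to coincide with one continuous semi-algebraic branch on each interval of the induced partition of $(a,b)$.

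Finally, on each open interval where $f$ is continuous I would run the same limit-existence argument on the difference quotients $\frac{f(t)-f(x)}{t-x}$ to conclude that $f$ is differentiable off a further finite set and that $f'$ is semi-algebraic (its graph is definable from that of $f$ via Remark \ref{remark:1}). By the structure fact the three semi-algebraic sets $\{f'>0\}$, $\{f'<0\}$ and $\{f'=0\}$ are finite unions of points and intervals, so adjoining their endpoints, the finitely many non-differentiability points, the finitely many points of $E$, and the endpoints $a,b$ yields a subdivision $a=t_0<\cdots<t_k=b$. On each $(t_i,t_{i+1})$ the function $f$ is continuous and $f'$ has constant strict sign or vanishes identically, whence $f$ is strictly increasing, strictly decreasing, or constant there, which is exactly the claimed conclusion.
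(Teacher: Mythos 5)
The paper offers no proof of this lemma: it is quoted as a classical result from van den Dries--Miller \cite{Dries1996}, so there is no internal argument to compare against, and your proposal must be judged on its own terms. Your outline follows the standard analytic route (one-sided limits exist, discontinuities are finite, then control the sign of $f'$), and the first step is correct and cleanly done: a semi-algebraic subset of $\mathbb{R}$, being a finite union of points and intervals, either contains a right-neighbourhood of $x$ or misses one, whence $f(x^+)$ exists in $[-\infty,+\infty]$. But the two finiteness steps, which carry all the weight, are not actually established. First, your contradiction for ``$f(x^+)=+\infty$ on $(c,d)$'' fails as written: knowing that $\{f>f(x_0)+1\}$ contains all but finitely many points of $(c,d)$ is perfectly compatible with $x_0$ being one of the finitely many exceptions. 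The correct conclusion is that $(c,d)=\bigcup_{n}\{t\in(c,d):f(t)\le n\}$ would then be a countable union of finite sets, contradicting the uncountability of $(c,d)$. Second, and more seriously, the claim that the sets on which $f(x^+)$, $f(x^-)$, $f(x)$ are ``pairwise ordered'' contain no interval is asserted by analogy, but the argument it needs is different in kind: one must write, say, $\{x:f(x)<f(x^+)\}\cap(c,d)=\bigcup_{q\in\mathbb{Q}}\{x\in(c,d):f(x)<q<f(x^+)\}$, extract by uncountability a rational $q$ whose set is infinite and hence contains an interval, and then observe that for $x$ in that interval the definition of the right limit produces nearby $t$ with $f(t)>q$ and $f(t)<q$ simultaneously. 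Nothing in your text supplies this.

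The differentiability stage has the same problem on a larger scale. Applying the limit-existence argument to difference quotients only yields that the one-sided derivatives exist in $[-\infty,+\infty]$; to get differentiability off a finite set you must again rule out that, e.g., $f'_+\equiv+\infty$ or $f'_+\ne f'_-$ on a whole interval, and these are not repeats of anything done earlier --- excluding $f'_+\equiv+\infty$ on $(c,d)$, for instance, requires a chaining/supremum argument showing $f(q)-f(p)\ge M(q-p)$ for every $M>0$ and every $[p,q]\subset(c,d)$, which contradicts boundedness of the continuous $f$ on compact subintervals. Your fallback of invoking cylindrical algebraic decomposition of the graph would indeed settle the continuity step rigorously (and is not circular in the semi-algebraic category, where CAD is established independently), but it is machinery at least as heavy as the theorem being proved and still does not by itself deliver the strict-monotonicity-or-constant dichotomy. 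In short: the strategy is the right one and the first step is sound, but the two ``no interval of bad points'' arguments that make the theorem true are either missing or incorrect as stated.
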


\begin{lemma}{\bf (Growth Dichotomy Lemma \cite{Miller1994})}\label{lemma:5.4}
Let $\epsilon_0>0$ and let $f$ be a continuous semi-algebraic function $f$ on $[0,\epsilon_0]$ with $f(0)=0$. Then either $f$ takes a constant value $0$ over $[0,\epsilon_0]$ or there exist
constants $c \neq 0$ and $p,q \in \mathbb{N}_{>0}$ such that $f(t)=c \, t^{\frac{p}{q}}+o(t^\frac{p}{q})$ as $t \rightarrow 0_+$.
\end{lemma}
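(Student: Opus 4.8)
The plan is to reduce the claim to the germ of $f$ at $0_+$ and then read off the leading asymptotics from a Puiseux expansion of the algebraic curve carrying the graph of $f$. The reduction is handled by the Monotonicity Theorem and the analytic heart is Newton--Puiseux.

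First I would invoke the Monotonicity Theorem (Lemma~\ref{lemma:5.3}) to shrink $\epsilon_0$ so that $f$ is continuous on $[0,\epsilon_0]$ and, on $(0,\epsilon_0)$, is either constant or strictly monotone. In the constant case, continuity together with $f(0)=0$ forces $f\equiv 0$ on $[0,\epsilon_0]$, which is the first alternative. So from now on I would assume $f$ is strictly monotone on $(0,\epsilon_0)$; in particular $f$ is not identically zero there. Next I would produce a single polynomial equation satisfied by the graph. The graph $\Gamma=\{(t,f(t)):t\in[0,\epsilon_0]\}$ is a semi-algebraic subset of $\mathbb{R}^2$ of dimension $1$, so its Zariski closure is a real algebraic curve, a proper subset of $\mathbb{R}^2$, and hence the vanishing ideal $I(\Gamma)$ is nonzero. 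Choosing any nonzero $P\in\mathbb{R}[t,s]$ vanishing on $\Gamma$ gives $P(t,f(t))=0$ for all $t\in[0,\epsilon_0]$. Since $\Gamma$ is infinite, $P$ cannot lie in $\mathbb{R}[t]$, so $\deg_s P\ge 1$; I would write $P(t,s)=\sum_{i=0}^d a_i(t)\,s^i$ with $d\ge 1$ and $a_d\not\equiv 0$.

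The analytic core is then the Newton--Puiseux theorem applied to $P(t,s)=0$ near $t=0$: for all small $t>0$ the complex roots in $s$ are given by finitely many convergent Puiseux series $\phi_\ell(t)=\sum_{k\ge k_\ell}c_{k,\ell}\,t^{k/N}$ for a common $N\in\mathbb{N}_{>0}$. For each such $t$ the value $f(t)$ is one of these roots, and using continuity of $f$ together with the fact that distinct branches are eventually separated, I would argue that $f$ coincides with a single real branch $\phi$ on some interval $(0,\epsilon_1)$. Finally, since $f(0)=0$, $f$ is continuous, and $f$ is not identically zero, the leading exponent of $\phi$ must be positive with nonzero coefficient; writing this lowest-order monomial as $c\,t^{p/q}$ with $c\neq 0$ and $p,q\in\mathbb{N}_{>0}$ (the exponent $k_\ell/N$ in lowest terms) yields $f(t)=c\,t^{p/q}+o(t^{p/q})$ as $t\to 0_+$, which is the second alternative.

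The main obstacle I anticipate is the bookkeeping in the Newton--Puiseux step: guaranteeing that the continuous real function $f$ locks onto one genuine \emph{real} Puiseux branch on a full one-sided neighborhood of $0$ (rather than switching between branches that merely agree at isolated points), and that the matched branch carries a strictly positive leading exponent. Both points are controlled by the continuity of $f$, the finiteness of the branch set, and the normalization $f(0)=0$, but they are the places that require genuine care. Once the correct branch is pinned down, the reduction via monotonicity and the extraction of the leading monomial are routine.
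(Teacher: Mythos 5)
The paper gives no proof of this lemma: it is simply quoted from Miller's article on o-minimal structures, so there is no internal argument to compare yours against. Your self-contained Newton--Puiseux proof is correct and is in fact the standard proof in the semialgebraic category. The reduction via the Monotonicity Theorem correctly disposes of the degenerate case (strict monotonicity together with $f(0)=0$ forces $f(t)\neq 0$ on a punctured right neighbourhood, which is exactly what you need to rule out a vanishing leading coefficient later); the Zariski closure of the one-dimensional graph is a proper algebraic curve because Zariski closure preserves the dimension of a semialgebraic set; and the branch-locking step works because the sets $\{t:f(t)=\phi_\ell(t)\}$ are closed, pairwise disjoint for small $t>0$ (two distinct Puiseux series differ by a nonzero Puiseux series, hence are separated near $0^+$), and cover a connected interval, so exactly one of them is nonempty and that branch is then automatically real; positivity of the leading exponent follows from $f(t)\to 0$. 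Two remarks. First, what you actually prove is the local dichotomy at $0$: as literally stated the lemma is slightly too strong, since a continuous semialgebraic $f$ can vanish identically on $[0,\epsilon_0/2]$ without vanishing on all of $[0,\epsilon_0]$ and without admitting a nonzero leading monomial; your reduction implicitly shrinks $\epsilon_0$, which is the intended reading and is all that is used in Appendix B. Second, Miller's cited theorem is strictly more general---it holds in arbitrary o-minimal expansions of the real field, where no Puiseux expansion exists and the exponent need only lie in the structure's field of exponents---so your algebraic route buys a short, elementary, self-contained proof at the cost of being specific to the semialgebraic setting, which is all the paper requires.
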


\vspace{-0.3cm}

\section*{Appendix B: Proof of Lemma \ref{lemma:2}}

\begin{proof}
[Proof of {\rm (i)}] From the definition of $(P_{\epsilon})$, it is clear that if $0 \le \epsilon_1 \le \epsilon_2$, then ${\rm val}(P_{\epsilon_1}) \ge {\rm val}(P_{\epsilon_2})$. Using a similar method of proof as in Lemma 4.1, one can show that $\epsilon \mapsto {\rm val}(P_{\epsilon})$  is a lower semicontinuous function. Now, let $\epsilon_k \rightarrow \epsilon_{+}$. Then, from the lower semicontinuity,
\[
\liminf_{k \rightarrow \infty}{\rm val}(P_{\epsilon_k}) \ge {\rm val} (P_{\epsilon}).
\]
This together with the fact that $\epsilon \mapsto {\rm val}(P_{\epsilon})$
is nonincreasing shows that
$\lim_{k \rightarrow \infty}{\rm val}(P_{\epsilon_k}) = (P_{\epsilon})$. So, this function is right continuous.


Let $J(x) := \min_{w}\{G(x, w) \ : \ h_j(w) \le 0, j = 1, \ldots, r\}.$ By
property {\rm (vi)},  $J$ is a semialgebraic function. Let
\begin{eqnarray*}
X & := & \{(\epsilon, x, y) \in [0, +\infty) \times \mathbb{R}^n \times \mathbb{R}^m
\ : \  g_i(x, y) \le 0, i = 1, \ldots, s, \\
&  & \hspace{5cm} h_j(y) \le 0, j = 1, \ldots, r,  G(x, y) - J(x) \le \epsilon \} \\
\mbox{and} & & \\
Y & := & \{(\epsilon, x, y) \in X  : f(x, y) \le f(a, b), \  \forall (\epsilon, a, b) \in X\}.
\end{eqnarray*}
We can verify that $X$ and $Y$ are semialgebraic sets by properties {\rm (ii)}, {\rm (iii)} and Remark \ref{remark:1}. Further, by definition, the graph of the function $\epsilon \mapsto {\rm val}(P_{\epsilon})$ is given by
$\{(\epsilon, f(x, y)) \ : \ (\epsilon, x, y) \in Y\}$.
Clearly, this set is the image of the set $Y$ under the semialgebraic map $(\epsilon, x, y) \mapsto (\epsilon, f(x, y)),$ and hence it is a semialgebraic set by property {\rm (v)}. Thus, $\epsilon \mapsto {\rm val}(P_{\epsilon})$ is a semi-algebraic function on $[0,+\infty)$.

Fix a finite interval $I \subseteq [0,+\infty)$. As $\epsilon \mapsto {\rm val}(P_{\epsilon})$ is a semialgebraic function,
it follows from Lemma \ref{lemma:5.3} that the function $\epsilon \mapsto \mathrm{val}(P_{\epsilon})$
is continuous over $I$ except at finitely many points.

[Proof of {\rm (ii)}] Fix a finite interval $I \subseteq [0,+\infty)$.
Denote the discontinuity points of $\epsilon \mapsto {\rm val}(P_{\epsilon})$ on $I$ by $\{\epsilon_1,\ldots,\epsilon_l\}$ for some $l \in \mathbb{N}$. Clearly, $\inf_{1 \le i \le l}\epsilon_i>0$ as $\epsilon \mapsto {\rm val}(P_{\epsilon})$ is right continuous. Let $\bar \epsilon=\min_{1 \le i \le l}\{\epsilon_i\}/2>0$. Then, $\epsilon \mapsto {\rm val}(P_{\epsilon})$ is continuous over $[0,\bar \epsilon]$.  Applying Lemma \ref{lemma:5.4} with $f$ replaced by $\epsilon \mapsto {\rm val}(P_{\epsilon})-{\rm val}(P)$ on $[0,\bar \epsilon]$, we see that there exist constants $c>0$, $p,q \in \mathbb{N}_{>0}$ and $\epsilon_0 \in (0,1)$ with $\epsilon_0 < \bar \epsilon$ such that
\begin{equation}\label{eq:0.16}
{\rm val}(P_{\epsilon}) \le {\rm val}(P) + c \, \epsilon^{\frac{p}{q}} \le {\rm val}(P) + c \, \epsilon^{\frac{1}{q}}, \quad
\textrm{ for all } \quad \epsilon \in [0,\epsilon_0],
\end{equation}
where the last inequality holds as $0<\epsilon \le \epsilon_0<1$.
 This, together with the nonincreasing property of $\epsilon \mapsto {\rm val}(P_{\epsilon}),$ yields the last assertion.
\end{proof}
\vspace{-0.3cm}
\section*{Appendix C: Proof of Theorem \ref{th:convergence} (Convergence of Algorithm 4.5)}
\begin{proof}
[Proof of {\rm (i)}]  Recall from Lemma \ref{lemma:4.4} that $J_k(x) \le J(x)$ for all $k \in \mathbb{N}$ and for all $x \in \Omega$. So,  ${\rm val}(P_{\epsilon}^k) \ge {\rm val}(P_{\epsilon})$ for all $k \in \mathbb{N}$.
This implies that $v_{\epsilon}^k \ge {\rm val}({P_{\epsilon}})$ for all $k \in \mathbb{N}$. As $v_{\epsilon}^k$ is a non-increasing sequence which is bounded below,
$\lim_{k \rightarrow \infty}v_{\epsilon}^k$ exists. Let $v_{\epsilon}=\lim_{k \rightarrow \infty}v_{\epsilon}^k$.
Then,
\begin{equation}\label{eq:lala0}
v_{\epsilon}  \ge {\rm val}(P_{\epsilon}).
\end{equation}
Let $\delta \in (0,\epsilon)$ and consider problem $(P_{\epsilon-\delta})$. By Assumption 2.1, $K$ and $F$ are compact sets. From the nonsmooth Danskin Theorem (see \cite[Page 86]{Clarke}), we see that the optimal value function of the lower level problem $J(x):=\min_{w \in \mathbb{R}^m}\{G( x,w): h_j(w) \le 0,j=1,\ldots,r\}$ is locally Lipschitz (and so, is continuous). Thus, a global minimizer of $(P_{\epsilon-\delta})$ exists. Let $(\bar x, \bar y)$ be a global minimizer of $(P_{\epsilon-\delta})$.
The set
$D_0:=\{(x,y) \in  K \cap (\mathbb{R}^n \times F): G(x,y)-J(x)< \epsilon, \;  f(x,y)<f(\bar x,\bar y)+\delta\}$
is a nonempty set as $(\bar x, \bar y) \in D_0$. Moreover, from our assumption  we have
$\mathrm{cl}\big(\mathrm{int}(K \cap (\mathbb{R}^n \times F))\big)=K \cap (\mathbb{R}^n \times F)$.
This together with the fact that
$\{(x,y): G(x,y)-J(x)< \epsilon, \;  f(x,y)<f(\bar x,\bar y)+\delta\}$ is an open set gives us that
\[
\tilde{D}:=\{(x,y) \in \mathrm{int}(K \cap (\mathbb{R}^n \times F)): G(x,y)-J(x)< \epsilon  \mbox{ and }  f(x,y)<f(\bar x,\bar y)+\delta\}
\]
is a nonempty open set. So, $D:={\rm Pr}_1 \tilde{D}=\{x \in \mathbb{R}^n: (x,y) \in \tilde{D} \mbox{ for some } y \in \mathbb{R}^m\}$ is also a nonempty open set.
%
%
Since $J_k$ converges to $J$ in $L^1(\Omega,\varphi)$-norm, $J_k$ converges to $J$ on $\Omega$ almost everywhere. Moreover, as $\varphi (\Omega)<+\infty$,  the classical Egorov's theorem
guarantees that there exists a subsequence
$l_k$ such that $J_{l_k}$ converges to $J$ $\varphi$-almost uniformly on $\Omega$.  So, there exists
 a Borel set $A$ with $\varphi(A)<\frac{\eta}{2}$ with $\eta:=\varphi(D) >0$ such that
$J_{l_k} \rightarrow J \mbox{ uniformly over }  \Omega \backslash A$.
As in the proof of Lemma \ref{lemma:6}, we can show that
$(\Omega \backslash A)\cap D \neq \emptyset$.
Let $x_0 \in (\Omega \backslash A)\cap D$. Then, we have $J_{l_k}(x_0) \rightarrow J(x_0)$ and there exists $y_0 \in F$ such that $G(x_0,y_0)-J(x_0)< \epsilon$ and
$
f(x_0,y_0)<f(\bar x,\bar y)+\delta.
$
 So, for all large $k$,
$G(x_0,y_0)-J_{l_k}(x_0) < \epsilon.$
Thus, for all large $k$, $(x_0,y_0)$ is feasible for $(P^{l_k}_{\epsilon})$ and
\[
v^{l_k}_{\epsilon} \le {\rm val}(P_{\epsilon}^{l_k}) \le f(x_0,y_0) <f(\bar x,\bar y)+\delta={\rm val}(P_{\epsilon-\delta})+\delta.
\]
Letting $k \rightarrow \infty$, we obtain that
$v_{\epsilon}=\lim_{k \rightarrow \infty}v_{\epsilon}^{l_k} \le  {\rm val}(P_{\epsilon-\delta})+\delta$.
Letting $\delta \rightarrow 0^+$, we see that
\begin{equation}\label{eq:lala1}
v_{\epsilon} \le \lim_{\delta \rightarrow \epsilon^{-}}{\rm val}(P_{\delta}).
\end{equation}
Therefore, the inequality ${\rm val}(P_{\epsilon}) \le v_{\epsilon} \le \displaystyle \lim_{\delta \rightarrow \epsilon^{-}}{\rm val}(P_{\delta})$  follows by combining (\ref{eq:lala0}) and (\ref{eq:lala1}).
To see the second assertion in {\rm (i)}, we only need to notice from Lemma \ref{lemma:2}{\rm (i)} that $\epsilon \mapsto {\rm val}(P_{\epsilon})$ is continuous except finitely many points over a finite
interval $I$.

[Proof of {\rm (ii)}] From Lemma \ref{lemma:2}{\rm (ii)}, we see that there exists $\epsilon_0>0$ such that $\epsilon \mapsto {\rm val}(P_{\epsilon})$ is continuous over
$(0,\epsilon_0)$. Thus, from {\rm (i)}, we have
$v_{\epsilon}^k \rightarrow {\rm val}(P_{\epsilon})$ for all $\epsilon \in (0,\epsilon_0)$. Now, fix any $\epsilon \in (0,\epsilon_0)$,
Let $\delta_k \downarrow 0$ as $k\rightarrow \infty$. Let $v_{\epsilon}^{k}=\min_{1 \le i \le k} {\rm val}(P_{\epsilon}^i)={\rm val}(P_{\epsilon}^{i_k})$ and let $(x_k,y_k)$ be a $\delta_k$-solution of $(P^{i_k}_{\epsilon})$.
Then, $\{(x_k,y_k)\} \subseteq K \cap (\mathbb{R}^n \times F)$. As $K$ and $F$ are compact, we see that
$\{(x_k,y_k)\}$ is a bounded sequence. Let $(\hat{x},\hat{y})$ be a cluster point of $\{(x_k,y_k)\}$. Clearly, $(\hat{x},\hat{y}) \in K \cap (\mathbb{R}^n \times F)$.
As $J_k \le J$ on $\Omega$ for all $k \in \mathbb{N}$, $x_k \in {\rm Pr}_1 K \subseteq \Omega$ and $(x_k,y_k)$ is feasible for $(P_{\epsilon}^{i_k})$. Hence, for each $k \in \mathbb{N}$
\[
G(x_k,y_k) -J(x_k) \le G(x_k,y_k) -J_{i_k}(x_k) \le \epsilon.
\]
Passing to the limit and noting that $J$ is continuous,  we get that
$G(\hat{x},\hat{y})-J(\hat{x}) \le  \epsilon$.
So, $(\hat{x},\hat{y})$ is feasible for $(P_{\epsilon})$. Finally, since $v_{\epsilon}^k \rightarrow {\rm val}(P_{\epsilon})$, it follows that
\[
f(\hat{x},\hat{y})=\lim_{k \rightarrow \infty}f(x_k,y_k) \le  \lim_{k \rightarrow \infty}(v_{\epsilon}^k +\delta_k)= {\rm val}(P_{\epsilon})
\]
and $(\hat{x},\hat{y})$ is a global minimizer of $(P_{\epsilon})$.
\end{proof}

{\bf Acknowledgement:} The authors are grateful to the anonymous referees for their insightful
comments and valuable suggestions which have contributed to the final preparation of the paper.

%
%
%


\begin{thebibliography}{99}

\bibitem{mix}  C. Audet, P. Hansen, B. Jaumard, and G. Savard, Links between linear bilevel and mixed 0-1
programming problems, 93 (1997), 273-300

\bibitem{Ash} R.B. Ash, \textit{Real Analysis \& Probability}, Academic Press, (1972).



\bibitem{Bilevel_book} J.F. Bard, \emph{Practical Bilevel Optimization: Algorithms
and Applications}, Kluwer Academic Publications, Dorchester, Netherlands, 1998.


\bibitem{Benedetta} R. Benedetto and J.-J. Risker, \emph{Real algebraic
and semialgebraic sets,} Actuality Mathematics, Hermann, 1991.

\bibitem{Bierstone1988} E. Bierstone and P. Milman, \emph{Semianalytic and
subanalytic sets,} Inst. Hautes \'Etudes Sci. Publ. Math. 67 (1988), 5-42.

\bibitem{Bochnak1998} J. Bochnak, M. Coste, and M.-F. Roy, \emph{Real
algebraic geometry,} Erg. Math. Grenzgeb., \textbf{36}, Springer, 1998.

\bibitem{Bolte} J. Bolte, H. Attouch and B.F. Svaiter, Convergence of descent methods for semi-algebraic and tame problems: proximal algorithms, forward-backward splitting, and regularized Gauss-Seidel method, \textit{Mathematical Programming}, vol. 137, no. 1-2, (2013),  91-129.

\bibitem{Clarke} F.H. Clarke, \emph{Optimization and Nonsmooth Analysis}, Wiley, New
York (1983).

\bibitem{Marcotte} A. Colson, P. Marcotte, G. Savard, An overview of bilevel
programming, \textit{Annals of Operations Research}, 153 (2007) 235-256.


\bibitem{Dempe0} S. Dempe, \emph{Foundations of Bilevel Programming}, Kluwer
Academic Publishers, 2002.

\bibitem{Dempe} S. Dempe, Annotated bibliography on bilevel programming and
mathematical programs with equilibrium constraints, \textit{Optimization},
52 (2003) 333-359.

\bibitem{Dempe1}  S. Dempe and J. Dutta, Is bilevel programming a special case of a mathematical program with
complementarity constraints?, \emph{Mathematical Programming} 131 (2012), 37-48.



\bibitem{Analysis} E. DiBenedetto,  \emph{Real Analysis},  Birkhäuser Advanced Texts, pp. x+485, (2002),

\bibitem{dinh-jeya} N. Dinh and V. Jeyakumar, Farkas' lemma: Three
decades of generalizations for mathematical optimization, \textit{TOP},
(2014), 22, 1-22.

\bibitem{Dries1996} {L. van den Dries and C. Miller,} \emph{Geometric
categories and o-minimal structures,} Duke Math. J., \textbf{84} (1996),
497--540.

\bibitem{test} C.A. Floudas, M. Pardalos, C.S. Adjiman, W.R. Esposito, Z.H. Gumus, S.T. Harding, J.L. Klepeis, C.A. Meyer and C.A. Schweiger,
Handbook of test problems in local and global optimization. Nonconvex Optimization and its Applications, 33. Kluwer Academic Publishers, Dordrecht, 1999.

\bibitem{Floudas2001} Z.H. G\"{u}m\"{u}s and C.A. Floudas, Global
Optimization of Nonlinear Bilevel Programming Problems, \textit{Journal of Global
Optimization}, 20 (2001)  1-31.


\bibitem{Gloptpoly} D. Henrion, J. B. Lasserre and J. Loefberg, GloptiPoly
3: moments, optimization and semidefinite programming. \textit{Optimization Methods
and Software}, 24 (2009), No. 4-5, 761-779.

\bibitem{Holder} H. Gfrerer, H\"{o}lder
 continuity of solutions of perturbed optimization problems under
Mangasarian-Fromowitz constraint qualification, \emph{Parametric Optimization and Related Topics}
(J. Guddat et al., ed.), Akademie-Verlag, Berlin, 1987, pp. 113-127.


\bibitem{JLL} V. Jeyakumar, G.M. Lee and G. Li, Alternative theorems for
quadratic inequality systems and global quadratic optimization, \textsl{SIAM
Journal on Optimization}, 20 (2009), no. 2, 983-1001.

\bibitem{JKLL} V. Jeyakumar, S. Kim, G. M. Lee and G. Li, Semidefinite programming relaxation methods for global optimization problems with sparse polynomials and unbounded semialgebraic feasible sets, \textsl{Journal of Global Optimization}, (2015), DOI 10.1007/s10898-015-0356-6.


\bibitem{JLL_JOTA} V. Jeyakumar, J.B. Lasserre and G. Li, On polynomial
optimization over non-compact semi-algebraic sets, \textit{Journal of
Optimization Theory and its Applications}, 163 (2014), 707-718.

\bibitem{JPL} V. Jeyakumar, T.S. Pham and G. Li, Convergence of the Lasserre
hierarchy of SDP relaxations for convex polynomial programs without
compactness, \textit{Operations Research Letters}, 42 (2014), 34-40.


\bibitem{kim} S. Kim, M. Kojima, H. Waki, and M. Yamashita, {\em Algorithm 920: SFSDP: a Sparse Version of Full SemiDefinite Programming relaxation for sensor network localization problems}, ACM Tran. Math. Soft., 38(2012) 1-19.


\bibitem{Lasserre2000} J.B. Lasserre, Global optimization with polynomials and the problem of moments,
{\sl SIAM Journal on Optimization}
11 (2000), 796-817.


\bibitem{Lasserre_book} J.B. Lasserre, \emph{Moments, Positive Polynomials and
Their Applications}, Imperial College Press, 2009.

\bibitem{Lasserre2010} J.B. Lasserre, A "joint+marginal" approach to
parametric polynomial optimization. \textit{SIAM Journal on Optimization}, 20 (2010), 1995-2022.

\bibitem{lasOptl} J. B. Lasserre, On representations of the feasible set in
convex optimization. \textit{Optimization Letters} (40) (2010), 1-5.

 \bibitem{Lasserre2011} J.B. Lasserre, Min-max and robust polynomial
 optimization. \textit{Journal of Global Optimization} 51 (2011), 1--10

\bibitem{Li_Mor_Pham_2013} G. Li, B.S. Mordukhovich and T.S. Pham, New error
bounds for nonconvex polynomial systems with applications to Holderian
stability in optimization and spectral theory of tensors, \textit{Mathematical Programming,}
153, no. 2 (2015), 333--362.

\bibitem{Ye} G.H. Lin, M.W. Xu and J. J. Ye, On solving simple bilevel
programs with a nonconvex lower level program, \textit{Mathematical Programming}, series A, 144
(2014), 277-305.

\bibitem{Lemma} P. Loridan and J. Morgan, New results on approximate solutions in two-level optimization, \textit{Optimization}, 20
(1989), 819-836.

\bibitem{existence} R. Lucchetti, F. Mignanego, and G. Pieri, Existence theorem of equilibrium points in Stackelberg games with constraints, \textit{Optimization},
18 (1987), 857-866.

\bibitem{MFCQ} O. L. Mangasarian and S. Fromovitz,  The Fritz John necessary optimality conditions in
the presence of equality and inequality constraints. \textit{Journal of Mathematical Analysis and its Application} 17, (1967), 37-47.


\bibitem{Miller1994} C. Miller, \emph{Exponentiation is hard to avoid,}
\textit{Proceedings of the American Mathematical Society}, {122} (1994), 257-259.

\bibitem{test1} A. Mitsos and P. I. Barton. A test set for bilevel programs. 2006,
avaliable at http://www.researchgate.net/publication/228455291.

\bibitem{Barton} A. Mitsos, P. Lemonidis and P. Barton, Global solution of
bilevel programs with a nonconvex inner program, \textit{J. Global Optim.}, 42(2008),
475-513.

\bibitem{Palbo} P.A. Parrilo, S. Lall, Semidefinite programming relaxations
and algebraic optimization in Control, European Journal of Control, 9
(2003), 307-321.


\bibitem{Put} M. Putinar, Positive polynomials on compact semi-algebraic
sets, \emph{Indiana University Mathematics Journal} 41 (1993), 49-95.

\bibitem{Sch} M. Schweighofer, Optimization of polynomials on compact
semialgebraic sets, \textit{SIAM Journal on Optimization}, 15, No. 3, 805-825 (2005).

\bibitem{Robinson}  S.M. Robinson, Generalized equations and their solutions, part II: Applications to nonlinear
programming, \emph{Mathematical Programming Study} 19, 200-221, (1982).


\bibitem{Sedumi}
J. F. Sturm. Using SeDuMi 1.02, a MATLAB toolbox for optimization over symmetric
cones. \textit{Optimization Methods and Software}, 11/12, 625-653, (1999).

\bibitem{Vicente} L.N. Vicente, P.H. Calamai, Bilevel and multilevel
programming: a bibliography review, \textit{Journal of Global Optimization},
5 (1994) 291-306.

\bibitem{bilevel} W. Wiesemann, A. Tsoukalas, P.M. Kleniati and B. Rustem,
Pessimistic bilevel optimization. \textit{SIAM Journal on Optimization,} 23 (2013), no. 1, 353-380.

\bibitem{Ye1} J.J. Ye and D.L. Zhu, New necessary optimality conditions for
bilevel programs by combining MPEC and the value function approach, \textit{SIAM Journal on Optimization}, 20(2010), 1885-1905.





%
%
%
%
%
%
%
%
%
%
%
%
%
%
%
%
%


%
%
%
%
%
%
%
%




\end{thebibliography}
\end{document}